\newcommand{\Z}{\mathbb{Z}}
\newcommand{\R}{\mathbb{R}}
\newcommand{\brak}[1]{\langle #1\rangle}
\def\calR{{\mathcal R}}
\def\calC{{\mathcal C}}
\def\calI{{\mathcal I}}
\newtheorem{theorem}{Theorem}
\newtheorem{lemma}{Lemma}
\newtheorem{proposition}[theorem]{Proposition}
\newtheorem{definition}[theorem]{Definition}
\newtheorem{remark}[theorem]{Remark}
\def\co{\colon\thinspace}
\begin{document}
\title{Invariants for trivalent tangles and handlebody-tangles}

\author{Carmen Caprau}
\address{Department of Mathematics, California State University, Fresno, CA 93740, USA}
\email{ccaprau@csufresno.edu}

\subjclass[2010]{57M27; 57M15, 57M25}
\keywords{handlebody-links, invariants, knotted trivalent graphs, tangles.}
\thanks{The author was partially supported by Simons Foundation grant $\#355640$}

\begin{abstract}
An enhanced trivalent tangle is a trivalent tangle with some of its edges labeled. We use enhanced trivalent tangles and classical knot theory to provide a recipe for constructing invariants for trivalent tangles, and in particular, for knotted trivalent graphs. Our method also yields invariants of, what we refer to as, enhanced handlebody-tangles and enhanced handlebody-links.
\end{abstract}

\maketitle

\section{Introduction} \label{intro}

A trivalent graph is a finite graph whose vertices have valency three, and a uni-trivalent graph is a finite graph whose vertices have valency three or one. In this paper, trivalent graphs and uni-trivalent graphs are not oriented, and may contain circle components.
A \textit{knotted trivalent graph} is a trivalent graph embedded in three-dimensional space, and a \textit{trivalent tangle} is a uni-trivalent graph embedded in $\R^2 \times [0,1]$ such that all of its univalent vertices belong to $\R^2 \times \{0\}$ and $\R^2 \times \{1\}$. We call a univalent vertex an \textit{endpoint} of the trivalent tangle. We note that a trivalent tangle with no endpoints is a knotted trivalent graph. Therefore, any statement that holds for trivalent tangles holds automatically for knotted trivalent graphs.

Two knotted trivalent graphs are called \textit{equivalent} (or \textit{ambient isotopic}) if there is an isotopy of $\R^3$ taking one onto the other. Moreover, two trivalent tangles are called equivalent if one can be transformed into the other by an isotopy of $\R^2 \times [0,1]$ fixed on the boundary.
It is well-known that two knotted trivalent graphs (or trivalent tangles) are equivalent if and only if their diagrams are related by a finite sequence of the moves R1 -- R5 depicted in Figure~\ref{fig:graph-moves} (see~\cite{Ka} for more details).

\begin{figure}[ht] 
\[\raisebox{-13pt}{\includegraphics[height=0.4in]{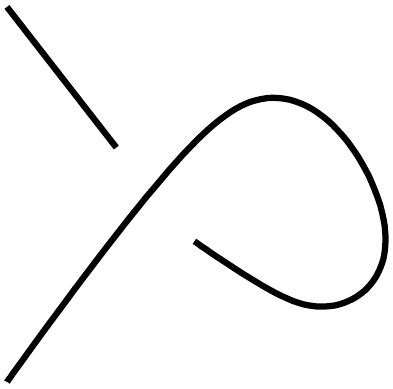}}\,\,  \stackrel{\text{R1}}{\longleftrightarrow} \,\, \raisebox{-13pt}{\includegraphics[height=0.4in]{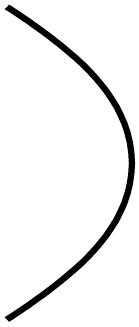}} \,\, \stackrel{\text{R1}}{\longleftrightarrow} \,\,   \reflectbox{\raisebox{17pt}{\includegraphics[height=0.4in, angle = 180]{poskink}}} \,\,\qquad\,\, 
 \raisebox{-13pt}{\includegraphics[height=0.4in]{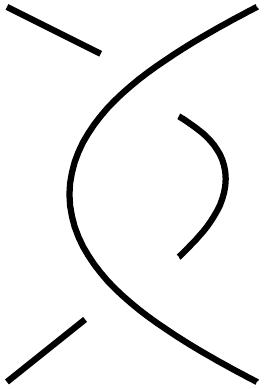}} \,\, \stackrel{\text{R2}}{\longleftrightarrow} \,\, \raisebox{-13pt}{\includegraphics[height=0.4in]{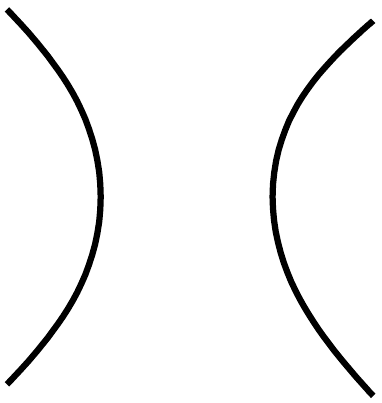}} \,\, \qquad \,\, 
\raisebox{-13pt}{\includegraphics[height=0.4in]{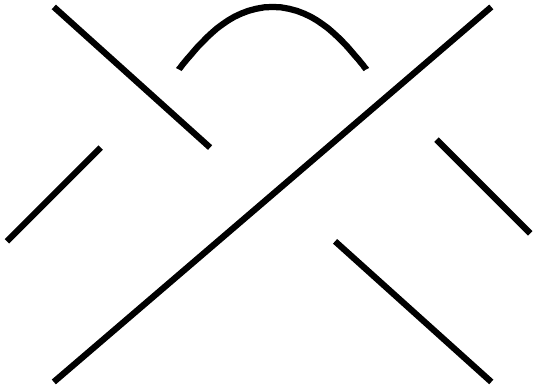}} \,\, \stackrel{\text{R3}}{\longleftrightarrow} \,\,   \raisebox{-13pt}{\includegraphics[height=0.4in]{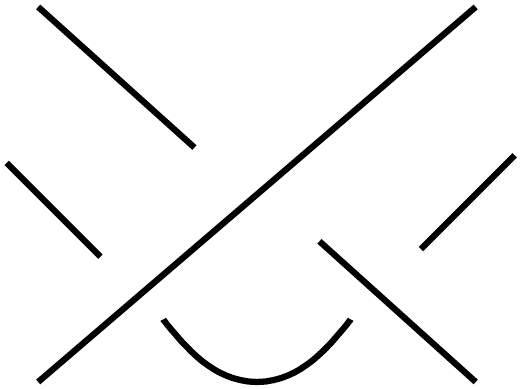}}\]
\[  \raisebox{-13pt}{\includegraphics[height=0.45in]{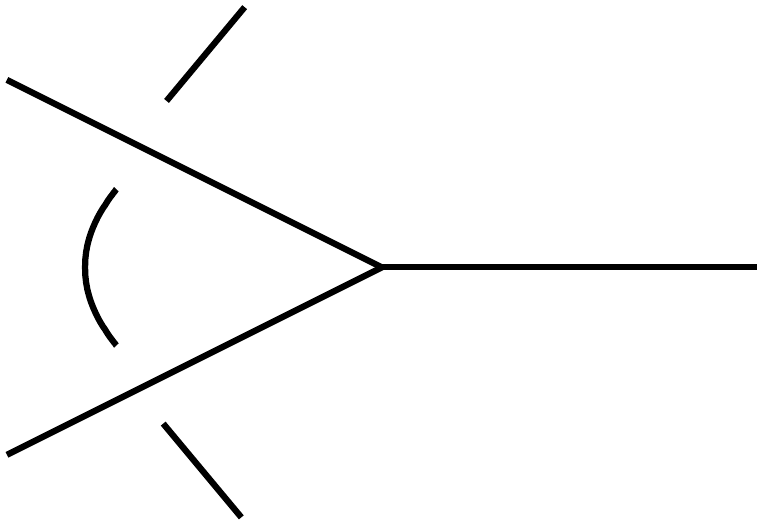}}\,\, \stackrel{\text{R4}}{\longleftrightarrow} \,\, \raisebox{-13pt}{\includegraphics[height=0.45in]{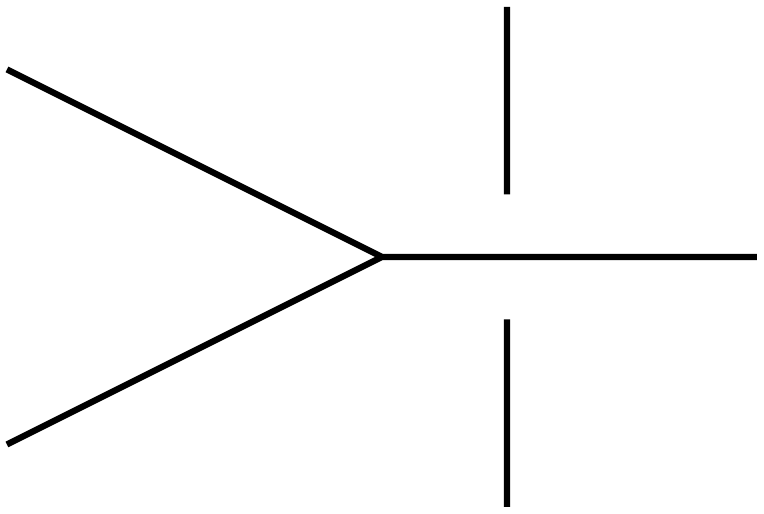}} \,\, \qquad \,\,  \raisebox{-13pt}{\includegraphics[height=0.45in]{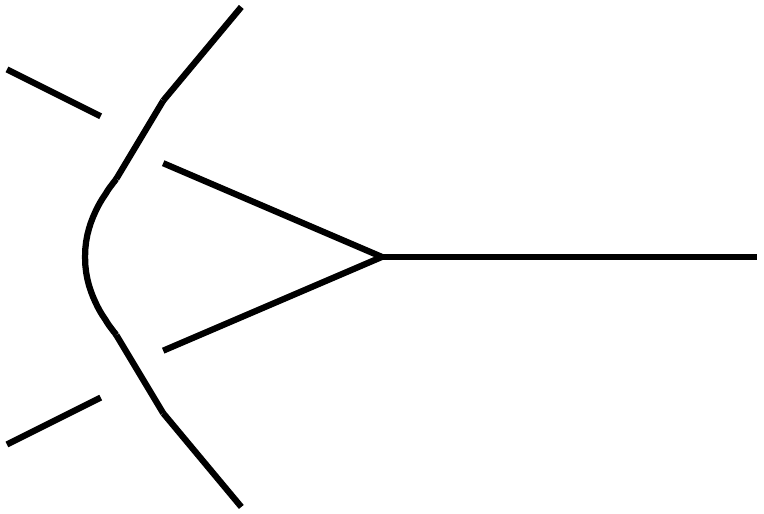}} \,\, \stackrel{\text{R4}}{\longleftrightarrow}\,\,  \raisebox{-13pt}{\includegraphics[height=0.45in]{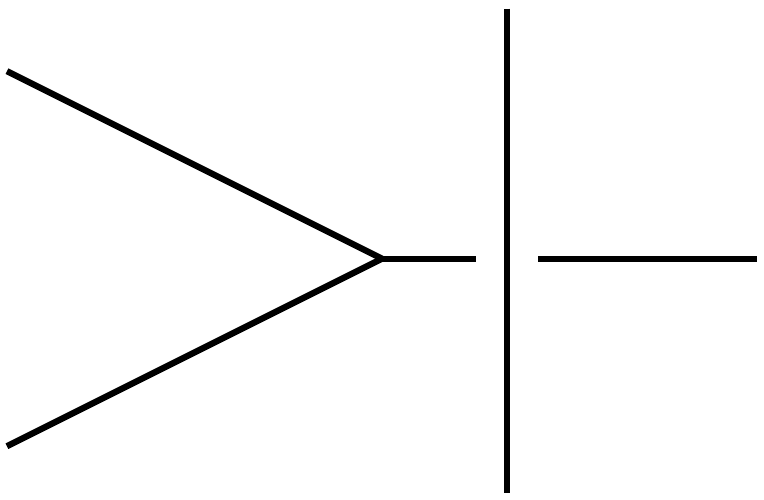}} \]
\[ \raisebox{-13pt}{\includegraphics[height=0.4in]{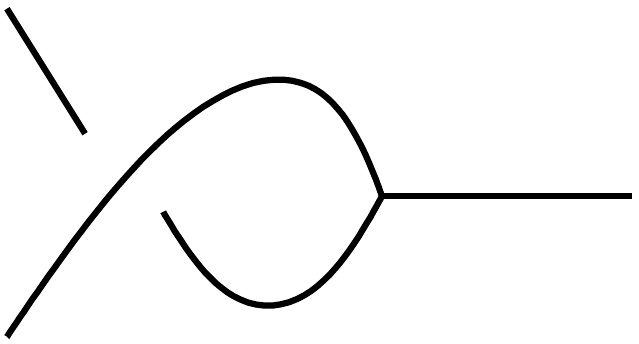}}\,\,\stackrel{\text{R5}}{ \longleftrightarrow} \,\, \raisebox{-13pt}{\includegraphics[height=0.4in]{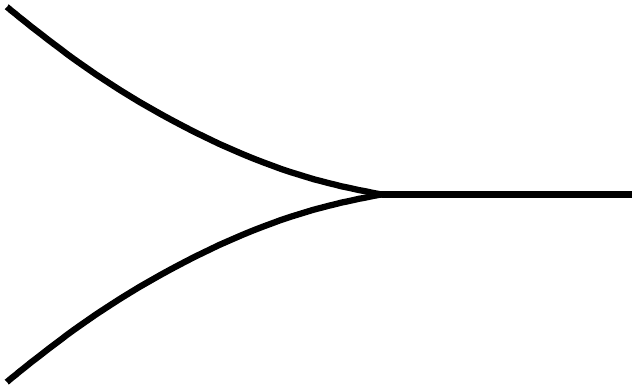}} \,\, \stackrel{\text{R5}}{\longleftrightarrow} \,\,  \raisebox{-13pt}{\includegraphics[height=0.4in]{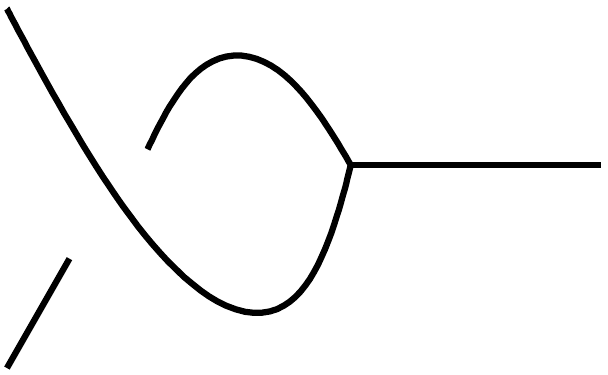}}\]
\caption{Moves for knotted trivalent graph diagrams} \label{fig:graph-moves}
\end{figure}

Two trivalent tangles with the same endpoints are called \textit{neighborhood equivalent} if there is an isotopy of $\R^2 \times [0,1]$ (which fixes its boundary) taking a regular neighborhood of one onto a regular neighborhood of the other. An \textit{IH-move} is a local change of a trivalent tangle, as shown in Figure~\ref{fig:IH-move}, applied in a disk embedded in the interior of $\R^2 \times [0,1]$. Two trivalent tangles with the same endpoints are called \textit{IH-equivalent} if they are related by a finite sequence of IH-moves and isotopies of $\R^2 \times [0,1]$ fixed on the boundary. Ishii~\cite{I1} showed that two trivalent tangles with the same endpoints (and respectively, two knotted trivalent graphs) are neighborhood equivalent if and only if they are IH-equivalent.  

\begin{figure}[ht]\
\[ \raisebox{-12pt}{\includegraphics[height=0.4in]{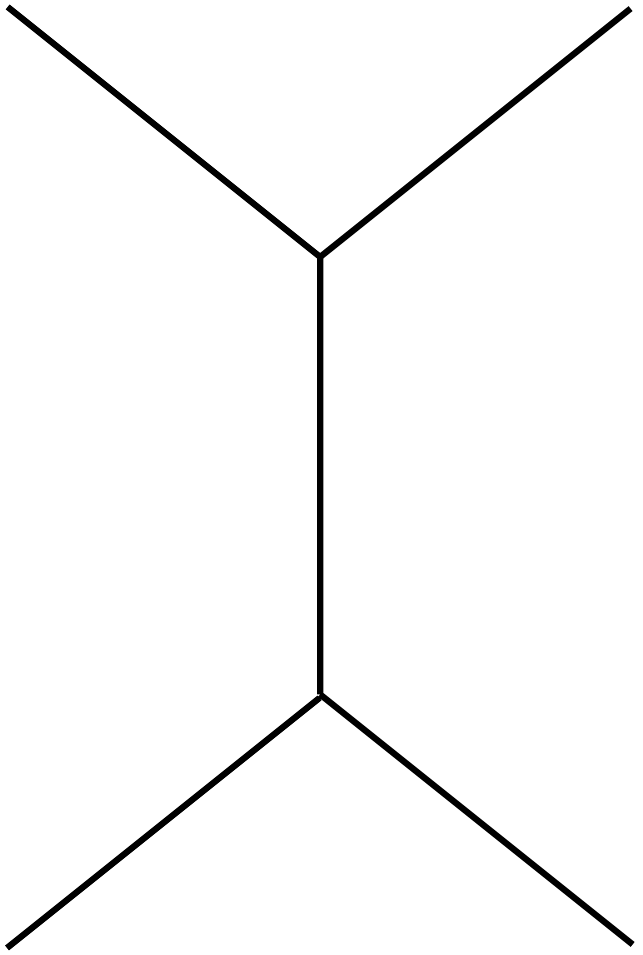}}\,\, \longleftrightarrow \,\, \raisebox{-12pt}{\includegraphics[height=0.4in]{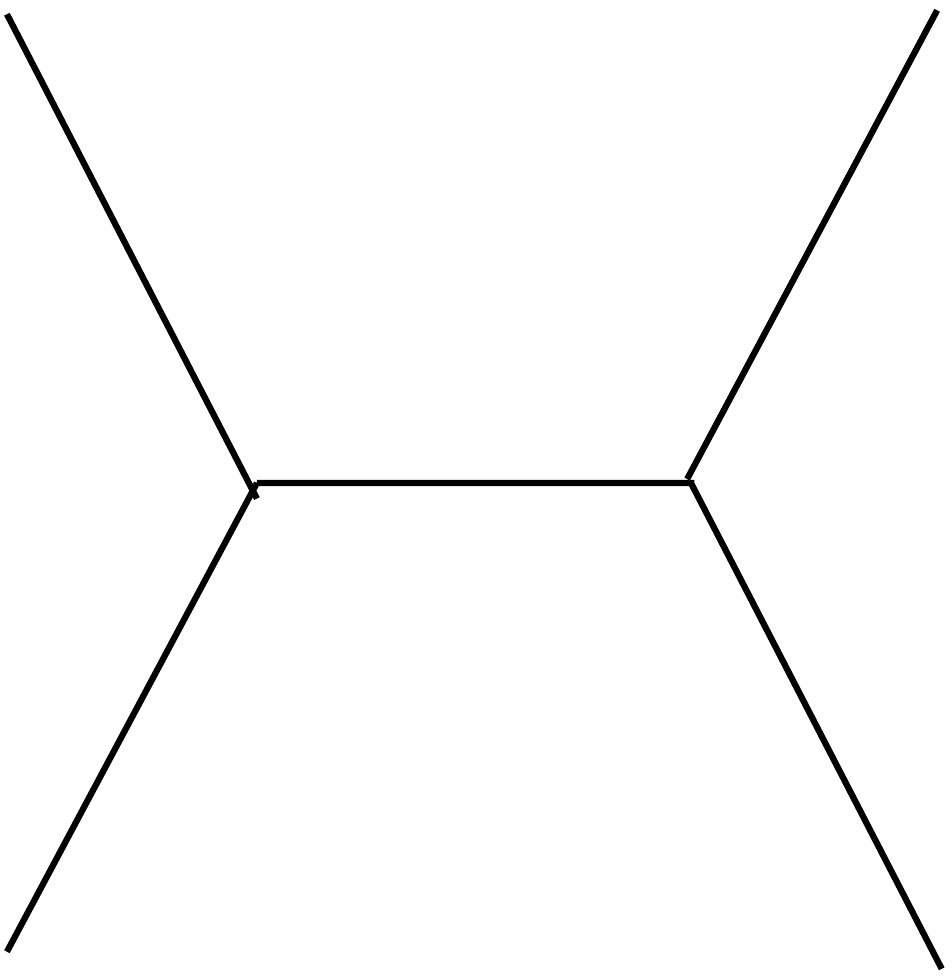}} \]
\caption{IH-move} \label{fig:IH-move}
\end{figure}

It follows that two trivalent tangles (respectively, knotted trivalent graphs) are neighborhood equivalent if and only if their diagrams are related by a finite sequence of the moves R1 -- R5 together with IH-moves (here regarded as moves in the plane). 

 A \textit{handlebody-tangle} is a disjoint union of handlebodies embedded in the three-ball $\R^2 \times [0,1]$ such that the intersection of the handlebodies with $\R^2 \times \{0\}$ and $\R^2 \times \{1\}$ consists of disks, called \textit{end disks} of the handlebody-tangle. A handlebody-tangle with no end disks is called a \textit{handlebody-link}. Two handlebody-tangles are called \textit{equivalent} if there exists an orientation-preserving homeomorphism of $\R^2 \times [0,1]$ into itself taking one onto the other, and which is the identity map on the boundary.

 Any handlebody-tangle  is a regular neighborhood of some trivalent tangle, and therefore, there is a one-to-one correspondence between the set of handlebody-tangles and the set of the neighborhood equivalence classes of trivalent tangles.  When a handlebody-tangle $H$ is a regular neighborhood of some trivalent tangle $T$ such that each end disk of $H$ contains exactly one endpoint of $T$, we say that $T$ is a \textit{spine} of $H$ (or that $H$ is \textit{represented} by $T$). Figure~\ref{fig:handlebody-tangle} shows a handlebody-tangle and two spines that represent it.

\begin{figure} [ht]
\[ \raisebox{-13pt}{\includegraphics[height=0.8in]{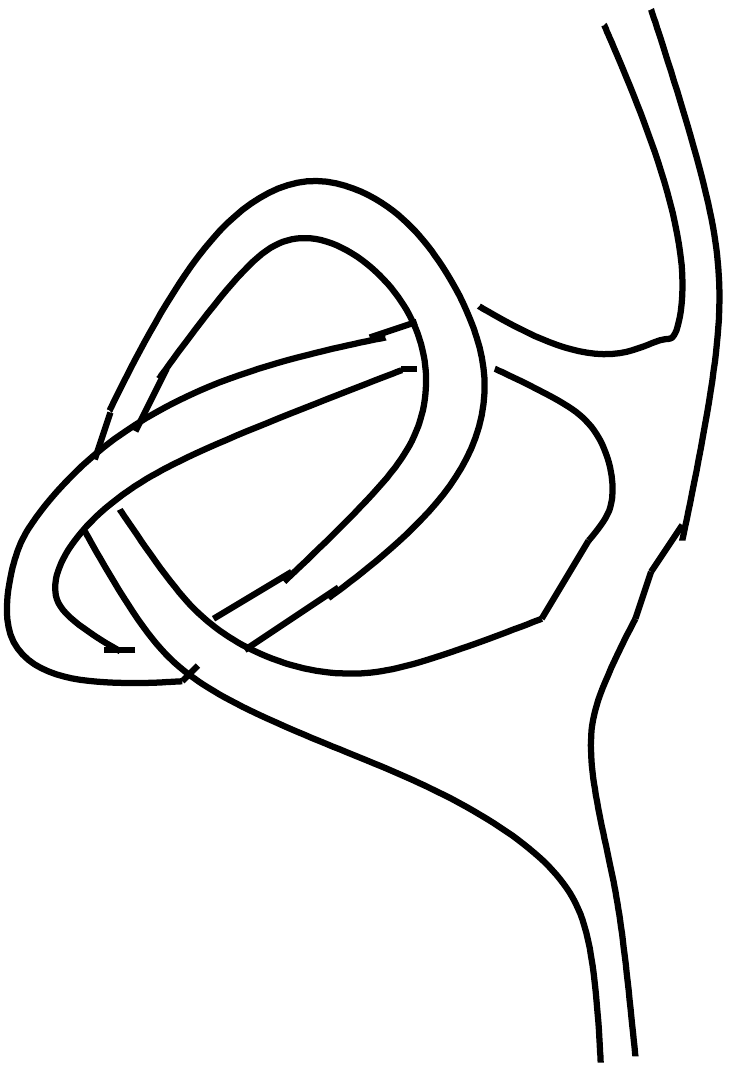}} \qquad \raisebox{-13pt}{\includegraphics[height=0.8in]{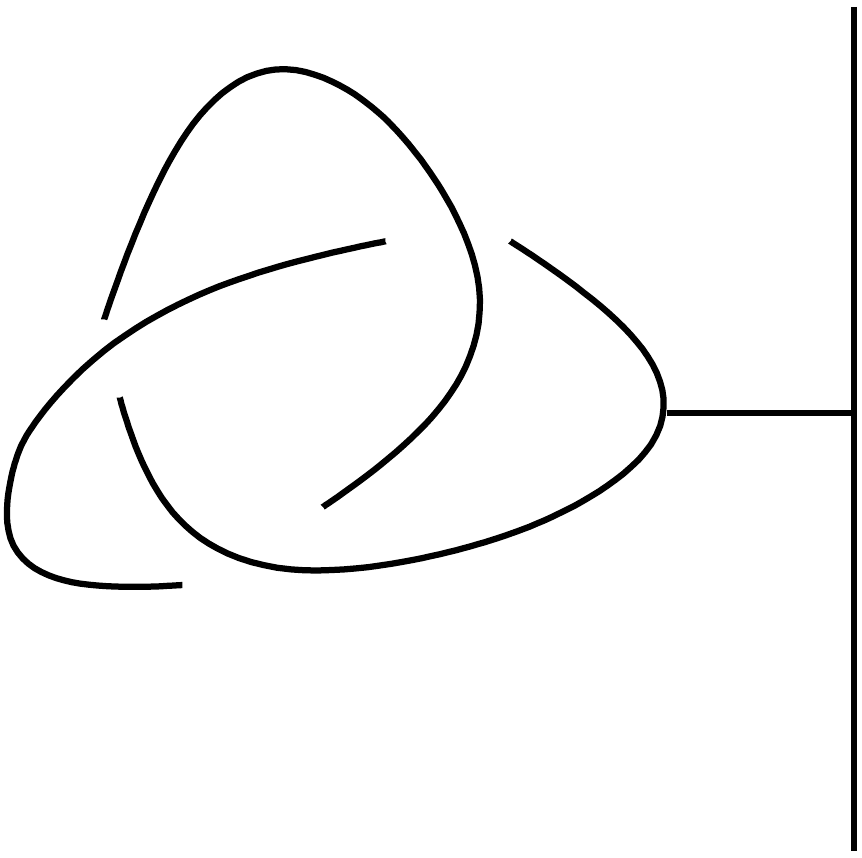}} \qquad \raisebox{-13pt}{\includegraphics[height=0.8in]{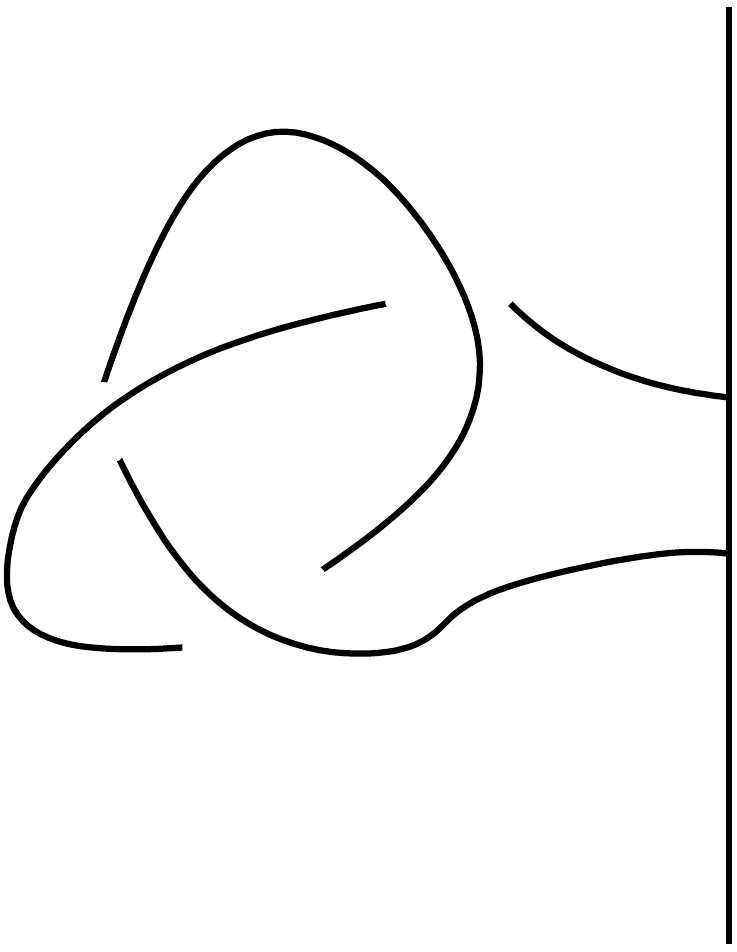}} \]
\caption{A handlebody-tangle and two spines of it} \label{fig:handlebody-tangle}
\end{figure}

The above statements imply that two trivalent tangles with the same endpoints represent equivalent handlebody-tangles if and only if their diagrams are related by a finite sequence of the moves R1 -- R5 together with IH-moves. For more details we refer the reader to~\cite{I1} (see also~\cite{I2}).  

Therefore, we can study handlebody-tangles  through diagrams of trivalent tangles.  To obtain an invariant for a handlebody-tangle $H$ represented by a trivalent tangle $T$, it suffices to construct an invariant of the IH-equivalence class of $T$; that is, one needs to associate to a diagram of $T$ some quantity which is invariant under the moves R1 -- R5, as well as under the IH-move.
Similarly, one can study handlebody-links through diagrams of knotted trivalent graphs up to the moves R1--R5 and IH-move.

In this paper, we introduce the notions of \textit{enhanced trivalent tangles} and \textit{enhanced handlebody-tangles}. An enhanced trivalent tangle is a trivalent tangle with an edge set on which IH-moves can be applied. An enhanced handlebody-tangle is a handlebody-tangle represented by an enhanced trivalent tangle. We use enhanced trivalent tangles and combinatorial knot theory to provide a general recipe for constructing invariants for trivalent tangles (and, in particular, for knotted trivalent graphs). We also construct numerical invariants for trivalent tangles; these invariants depend on the definition of the Kauffman bracket of classical knots and links. The recipe provided in this paper also yields invariants of enhanced handlebody-tangles.

The paper is organized as follows: In Section~\ref{ssec:enhanced graphs} we introduce the notions of enhanced trivalent tangles and that of IH-equivalence classes of enhanced trivalent tangles. Then we explain that there is a one-to-one correspondence between the set of IH-equivalence classes of enhanced trivalent tangles and the set of ambient isotopy classes of $4$-valent tangles (see Lemma~\ref{lemma}).
We use this statement to provide a recipe for constructing invariants of the IH-equivalence class of an enhanced trivalent tangle $(G, \rho)$ with diagram $D_{\rho}$ via a collection $\calC(\overline{D}_{\rho})$ of knot theoretic tangle diagrams associated with $\overline{D}_{\rho}$; here $\overline{D}_{\rho}$ is the $4$-valent tangle diagram obtained from $D_{\rho}$ by contracting its thick edges (see Proposition~\ref{prop:inv-enhanced}). In Section~\ref{ssec:inv trivalent tangles} we show how one can use 3-move invariants of knot theoretic tangles to finally arrive at invariants of IH-equivalence classes of enhanced trivalent tangles, and of enhanced handlebody-tangles. Therefore, it remains to find 3-move invariants for classical tangles. Given an $(m, n)$-tangle $T$ with diagram $D$, in Section~\ref{sec:inv tangles} we use skein modules and basic linear algebra concepts to define a polynomial $P(D) \in \mathbb{Z}[q, q^{-1}]$ in terms of the skein class $\brak{D}$ of $D$ (see Definition~\ref{def:P(D)}). It turns out that $P(D)$ is equal to the unnormalized Kauffman bracket of the knot or link obtained by taking the plat closure of the tangle $T \otimes \overline{T}$, where $\overline{T}$ is the mirror image of $T$. In Theorem~\ref{main result} we prove that, for each $k \in \{1, 5, 7, 11, 13, 17, 19, 23\}$, the complex number $P(D) |_{q = e^{\frac{k \pi i}{12}}}$ is a 3-move invariant for the $(m, n)$-tangle $T$.


\section{Constructing invariants for trivalent tangles}  \label{sec:trivalent tangles}

\subsection{Enhanced trivalent tangles}\label{ssec:enhanced graphs}

In this paper, handlebody-tangles have an even number of end disks, and trivalent tangles have an even number of endpoints (univalent vertices). Equivalently, a trivalent tangle contains an even number of trivalent vertices. Recall that any knotted trivalent graph contains an even number of trivalent vertices. 

Let $G$ be a trivalent tangle. We call an edge of $G$ joining two trivalent vertices an \textit{internal edge}, and an edge incident to an endpoint of $G$ an \textit{external edge}. Let $\rho$ be a map from the set of edges of $G$ to the set $\{1,2 \}$ such that $\rho(e_1) + \rho(e_2) + \rho(e_3) = 4$ for edges $e_1, e_2, e_3$ incident to a trivalent vertex, under the restriction that external edges and cycles or loops may be assigned only the value $1$. Denote by $\calR(G)$ the set of all such maps, and call the pair $(G, \rho)$, for some $\rho \in \calR(G)$, an \textit{enhanced trivalent tangle} associated to $G$. We represent an edge $e$ for which $\rho(e) = 2$ by a `thick' edge in a diagram of $(G, \rho)$. We note that for an enhanced trivalent tangle, there is at most one thick edge joining a pair of adjacent trivalent vertices, and that no external edge is a thick edge. An \textit{enhanced knotted trivalent graph} is an enhanced trivalent tangle with no endpoints.

We say that two enhanced trivalent tangles $(G_1, \rho_1)$ and $(G_2, \rho_2)$ with the same endpoints are \textit{equivalent} (or \textit{ambient isotopic}) if there exists an orientation-preserving homeomorphism $f \co \R^2 \times [0,1] \to \R^2 \times [0,1]$ (fixing the boundary) such that $f(G_1) = f(G_2)$ and $f(E_{\rho_1}) = f(E_{\rho_2})$, where $E_{\rho_1}$ and $E_{\rho_2}$ are the sets of thick edges in $(G_1, \rho_1)$ and $(G_2, \rho_2)$, respectively.  An \textit{IH-move on enhanced trivalent tangles} is an IH-move which replaces thick edges with thick edges. 
We say that two enhanced trivalent tangles with the same endpoints are \textit{$IH$-equivalent} if they are related by a finite sequence of IH-moves on thick edges and isotopies of $\R^2 \times [0,1]$ fixed on the boundary. These definitions extend to enhanced knotted trivalent graphs.

An \textit{enhanced handlebody-tangle} (respectively, \textit{enhanced handlebody-link}) is the IH-equivalence class of an enhanced trivalent tangle (respectively, enhanced knotted trivalent graph). It follows that in order to construct an invariant for an enhanced handlebody-tangle $H$ represented by an enhanced trivalent tangle $(G, \rho)$, it suffices to construct an invariant of the IH-equivalence class of $(G, \rho)$. 

For each enhanced  trivalent tangle $(G, \rho)$, there exists an associated $4$-valent tangle $\overline{G}_{\rho}$ obtained by contracting each thick edge $\textbf{e}$ in $(G, \rho)$. A \textit{$4$-valent tangle} is a uni-four-valent graph embedded in $B^3$, whose intersection with $\partial B^3$ consists of its univalent vertices (or endpoints). A \textit{contraction move} is a local change as depicted in Figure~\ref{fig:contraction move}, where the replacement is applied in a disk embedded in the interior of $B^3$.
\begin{figure}[ht]
\[  \raisebox{-7pt}{\includegraphics[height=0.3in]{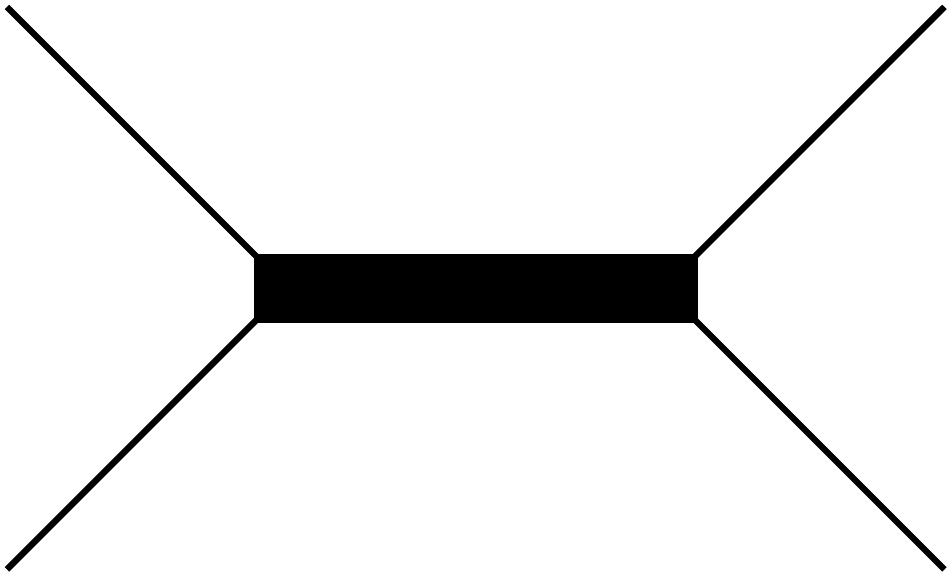}} \,\, \longleftrightarrow \,\,  \raisebox{-7pt}{\includegraphics[height=0.3in]{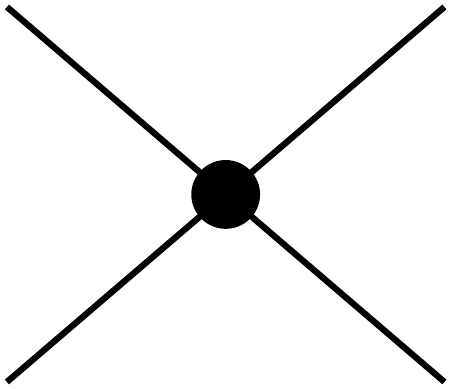}} \]
\caption{Contraction move} \label{fig:contraction move}
\end{figure}

Recall that two knotted $4$-valent graphs (or two $4$-valent tangles with the same endpoints) are \textit{ambient isotopic} if and only if their diagrams are related by a finite sequence of the Reidemeister moves R1 -- R3 and the moves N4 -- N5 given in Figure~\ref{fig:isotopies 4-valent graphs} below (see~\cite{Ka}).
\begin{figure}[ht]
\[  \raisebox{-12pt}{\includegraphics[height=0.4in]{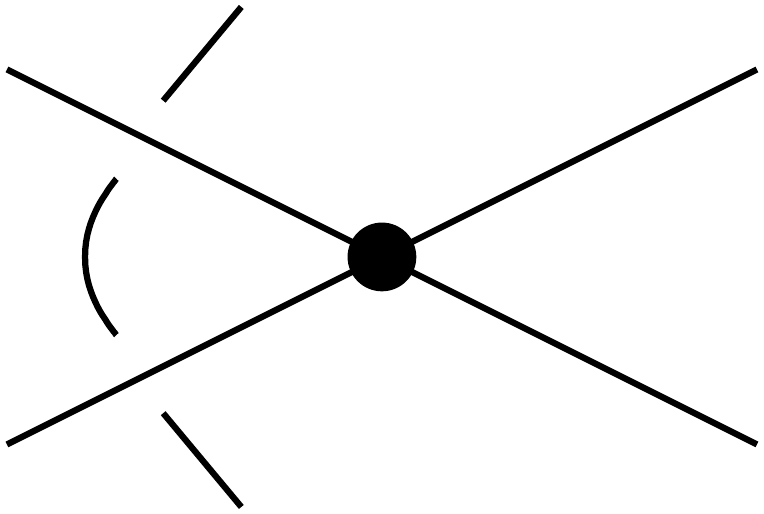}}\,\, \stackrel{\text{N4}}{\longleftrightarrow} \,\, \raisebox{-12pt}{\includegraphics[height=0.4in]{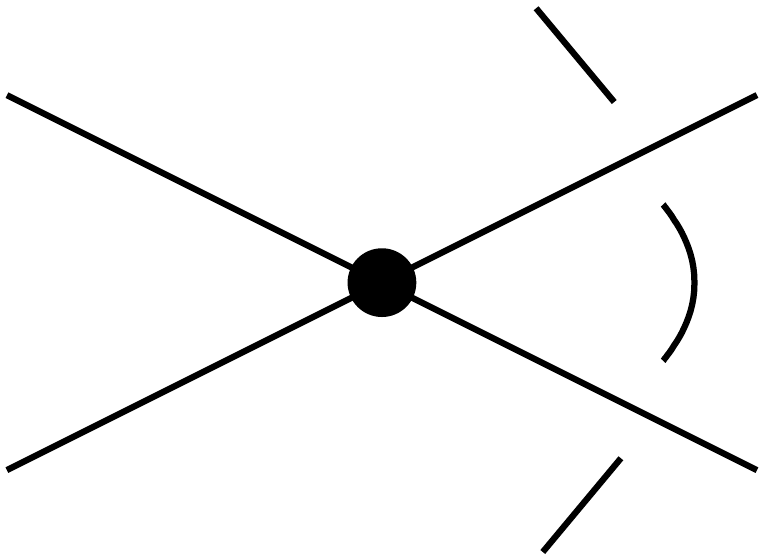}} \,\, \qquad \,\,  \raisebox{-12pt}{\includegraphics[height=0.4in]{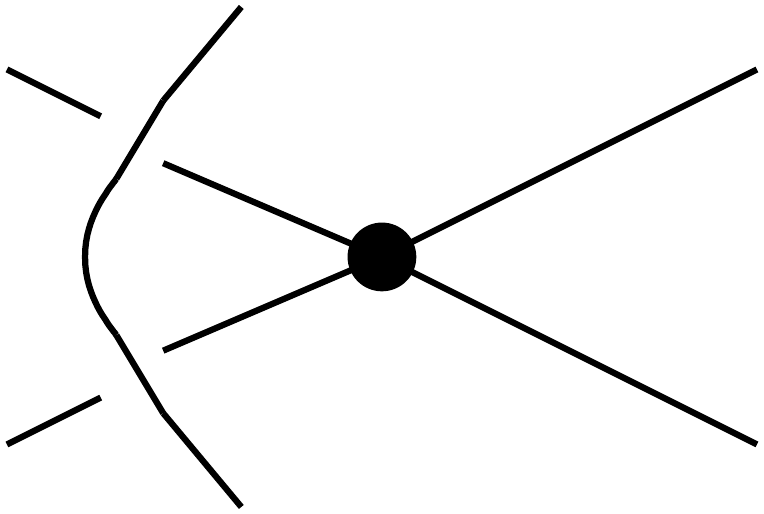}} \,\, \stackrel{\text{N4}}{\longleftrightarrow}\,\,  \raisebox{-12pt}{\includegraphics[height=0.4in]{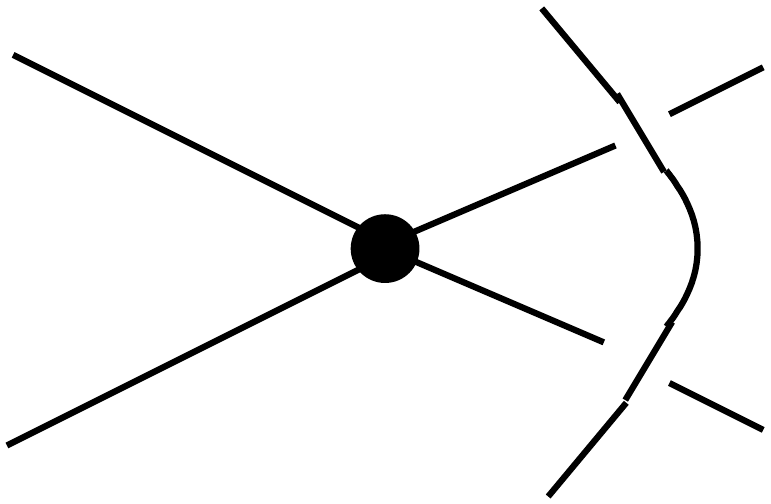}} \]
\[ \raisebox{-10pt}{\includegraphics[height=0.35in]{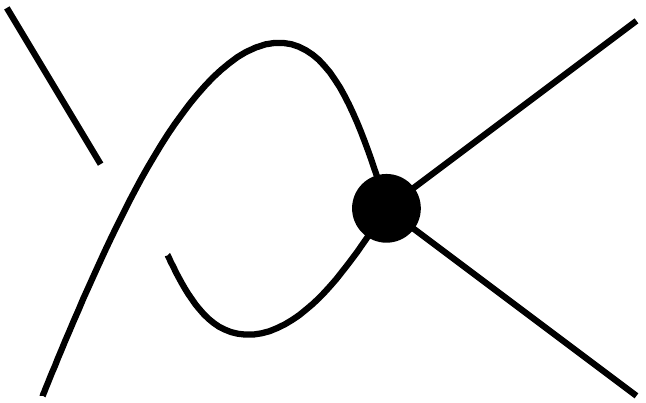}}\,\,\stackrel{\text{N5}}{ \longleftrightarrow} \,\, \raisebox{-10pt}{\includegraphics[height=0.35in]{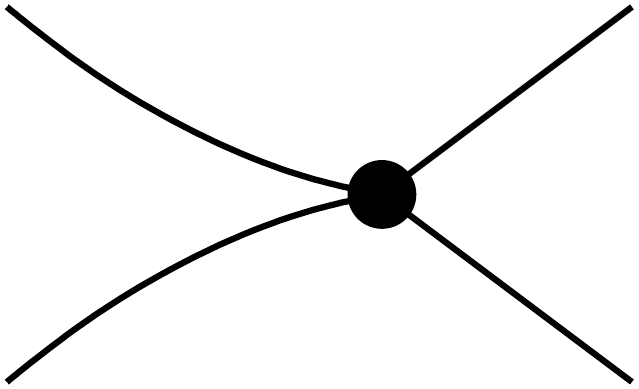}} \,\, \stackrel{\text{N5}}{\longleftrightarrow} \,\,  \raisebox{-10pt}{\includegraphics[height=0.35in]{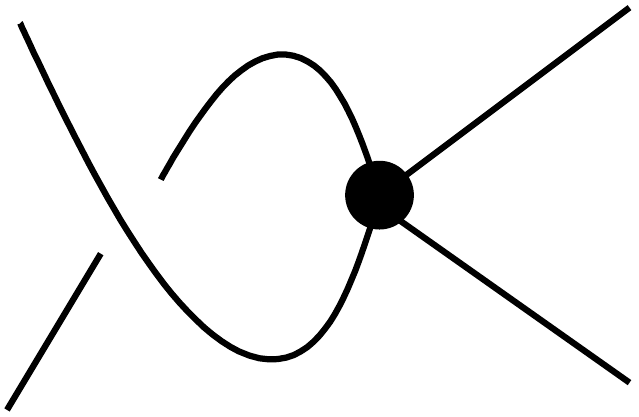}}\]
\caption{Moves for $4$-valent graph diagrams}\label{fig:isotopies 4-valent graphs}
\end{figure}

Let $D_{\rho}$ be a diagram of an enhanced  trivalent tangle $(G, \rho)$, and denote by $\overline{D}_{\rho}$ a diagram of the associated $4$-valent tangle $\overline{G}_{\rho}$. If $D_{\rho}$ and $D'_{\rho}$ are diagrams of an enhanced trivalent tangle $(G, \rho)$, then there are diagrams $\overline{D}_{\rho}$ and $\overline{D'}_{\rho}$ representing the $4$-valent tangle $\overline{G}_{\rho}$ obtained from $(G, \rho)$ by applying the contraction move given in Figure~\ref{fig:contraction move}. In particular, diagrams $\overline{D}_{\rho}$ and $\overline{D'}_{\rho}$ can be obtained from $D_{\rho}$ and $D'_{\rho}$, respectively,  by contracting their thick edges. 
Therefore, we can study an enhanced trivalent tangle $(G, \rho)$ through diagrams $\overline{D}_{\rho}$ of $4$-valent tangles.

A few words are needed here, as a thick edge in the diagram $D_{\rho}$ might cross under or over (at least) an edge, making the contraction move for diagrams of trivalent graphs/tangles somewhat ambiguous. Below we exemplify the case of a thick edge crossing over a `thin' edge, where we see that the two $4$-valent diagrams on the right are the same, up to the move N4. 
 \[ \raisebox{-9pt}{\includegraphics[height=0.35in]{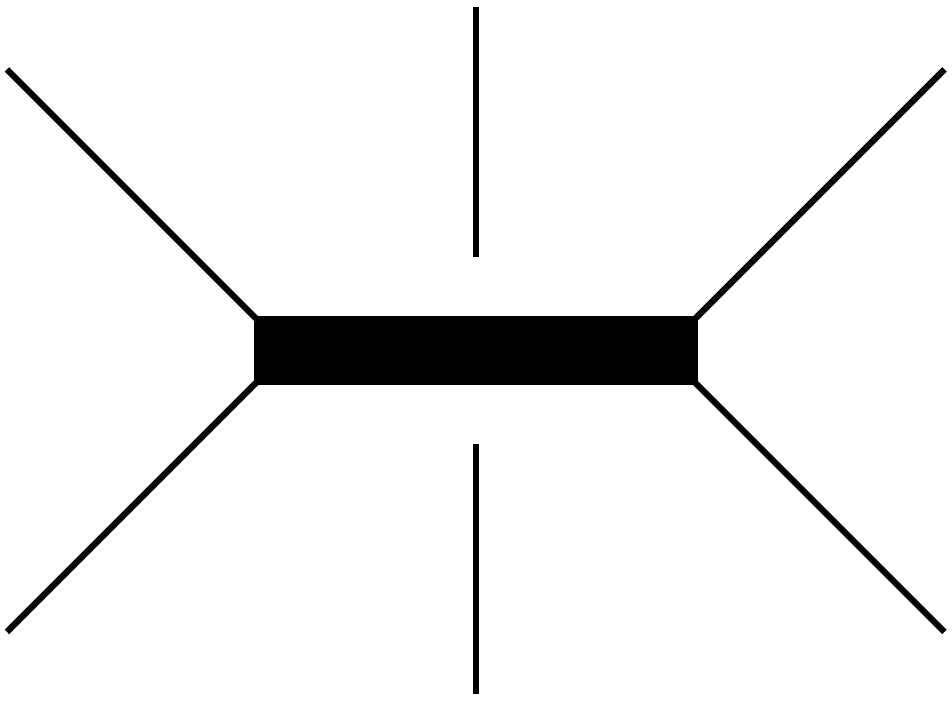}} \,\, \longrightarrow \,\,  \raisebox{-9pt}{\includegraphics[height=0.35in]{N4-left1}} \,\,\, \mbox{or} \,\,\, \raisebox{-9pt}{\includegraphics[height=0.35in]{N4-right1}} \]
The case in which a thick edge crosses under and/or over a few edges, some of which may be thick edges, are also unambiguous up to the move N4. Since we will be working with $4$-valent tangle diagrams up to the moves N4 -- N5 and Reidemeister moves R1 -- R3, we can assume that a trivalent tangle diagram $D_{\rho}$ does not contain crossings involving thick edges (except for self intersection of thick edges).

The following statement follows from the above discussion.

\begin{lemma} \label{lemma}
There is a one-to-one correspondence between the sets of ambient isotopy classes, as well as IH-equivalence classes, of enhanced trivalent tangles (or enhanced knotted  trivalent graphs) and that of $4$-valent tangles (or knotted $4$-valent graphs). 
\end{lemma}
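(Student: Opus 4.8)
The plan is to realize the claimed correspondence by the mutually inverse operations of \emph{contraction} and \emph{expansion} at the level of diagrams, and then to check that these operations intertwine the two lists of generating moves. First I would record the structural fact underlying everything: since $\rho$ takes values in $\{1,2\}$ with $\rho(e_1)+\rho(e_2)+\rho(e_3)=4$ at every trivalent vertex, the only possibility is one thick and two thin edges at each vertex. Hence the thick edges form a perfect matching on the set of trivalent vertices, no thick edge is a loop or an external edge, and every thick edge joins two distinct trivalent vertices. Contracting every thick edge $\mathbf{e}$ as in Figure~\ref{fig:contraction move} therefore merges the two vertices it joins into a single four-valent vertex without disturbing the external (thin) edges; this defines the contraction map $\Phi$ on diagrams, shows it preserves endpoints and fixes the boundary, and lets its inverse, the expansion map $\Psi$, replace each four-valent vertex by a short thick edge with its two trivalent endpoints.

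The point on which everything turns is that a four-valent vertex admits exactly two expansions, the ``I'' and the ``H'' of Figure~\ref{fig:IH-move}, and that a short computation of the cyclic order of the four incident thin edges shows both expansions contract back to the \emph{same} four-valent vertex. Thus $\Phi\circ\Psi$ is the identity on four-valent diagrams, while the two choices available to $\Psi$ at each vertex differ precisely by an IH-move on thick edges. In other words, the ambiguity in $\Psi$ is exactly an IH-move, so $\Phi$ is constant on IH-equivalence classes and $\Psi$ is well defined up to IH-equivalence; this already yields the bijection between IH-equivalence classes of enhanced trivalent tangles and (IH-equivalence, equivalently ambient isotopy) classes of four-valent tangles. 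For the parallel statement at the level of ambient isotopy classes, I would keep track of the thick-edge datum that distinguishes the two expansions, so that $\Phi$ and $\Psi$ remain genuinely inverse before the IH-move is allowed to identify them.

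Next I would verify the move dictionary. Away from thick edges the moves R1, R2, R3 of Figure~\ref{fig:graph-moves} involve only crossings and thin edges, and $\Phi$ carries them to the identically labelled moves R1, R2, R3 of Figure~\ref{fig:isotopies 4-valent graphs}. The moves R4 and R5, which slide a strand past a trivalent vertex and rotate edges about a trivalent vertex respectively, each involve a trivalent vertex; after contracting the incident thick edge they become precisely the four-valent moves N4 and N5, and conversely $\Psi$ lifts each N4 or N5 move to an R4 or R5 move, possibly accompanied by an IH-move to fix the local expansion. Throughout this step I would invoke the normalization, justified via N4 in the discussion preceding the lemma, that a diagram $D_\rho$ may be assumed to carry no crossings on thick edges except self-crossings; this is what renders the contraction move unambiguous and keeps the two move lists in exact correspondence.

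The main obstacle is the completeness and exactness of this dictionary: one must check that every generating move on four-valent tangle diagrams lifts under $\Psi$ to a finite sequence of moves R1--R5 and IH-moves, and that every such move on the enhanced side descends under $\Phi$ to a finite sequence of R1--R3, N4, N5. The delicate cases are those in which a strand interacts with the short thick edge to which a single four-valent vertex expands, since there the lift may legitimately require an intermediate IH-move. Once the dictionary is established, $\Phi$ and $\Psi$ are mutually inverse and descend to the asserted bijections: on IH-equivalence classes, where the I/H ambiguity of $\Psi$ is harmless, and on ambient isotopy classes, where the thick-edge datum is retained. The knotted-graph (no-endpoint) case is identical, since external edges play no role in the argument.
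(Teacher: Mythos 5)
Your proposal is correct and follows essentially the same route as the paper, whose ``proof'' of Lemma~\ref{lemma} is simply the discussion preceding it: contraction of thick edges (well defined up to N4 once thick edges are assumed crossing-free), with the inverse expansion ambiguous exactly up to an IH-move on thick edges. Your write-up merely makes explicit what the paper leaves implicit---the perfect-matching structure of thick edges, the I/H expansion dichotomy, and the R4/R5 versus N4/N5 move dictionary---so no further comparison is needed.
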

The next step is to create a collection $\calC(\overline{D}_{\rho})$  of knot theoretic tangle diagrams obtained via the local replacements depicted in Figure~\ref{fig:replacements}.
\begin{figure}[ht]
\[    \raisebox{-7pt}{\includegraphics[height=0.3in]{cross}} \,\, \leadsto \,\, \left \{ \,\ \raisebox{-7pt}{\includegraphics[height=0.3in]{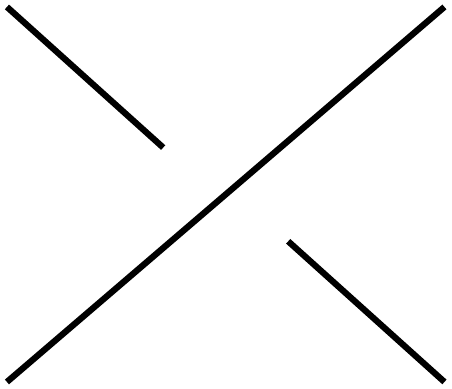}}, \,\,\,  \raisebox{-7pt}{\includegraphics[height=0.3in]{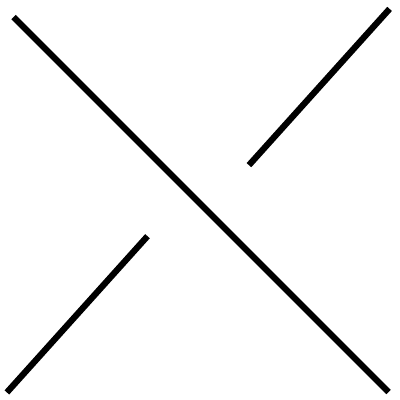}},\,\,\, \raisebox{-7pt}{\includegraphics[height=0.3in]{A-smoothing}}, \,\,\,  \raisebox{-7pt}{\includegraphics[height=0.3in, angle = 90]{A-smoothing}} \,\, \right \}\]
\caption{Local replacements}\label{fig:replacements}
\end{figure}

 Let $T_{-} = \raisebox{-5pt}{\includegraphics[height=0.2in]{poscrossing}}, \, T_{+} = \raisebox{-5pt}{\includegraphics[height=0.2in]{negcrossing}}, \, T_0 = \raisebox{-5pt}{\includegraphics[height=0.2in]{A-smoothing}}, \, T_{\infty} = \raisebox{-5pt}{\includegraphics[height=0.2in, angle = 90]{A-smoothing}}$ be the (2,2)-tangle diagrams depicted in Figure~\ref{fig:replacements}. 
 Let $\overline{D_{\rho}}$ be a diagram of a 4-valent tangle $\overline{G}_{\rho}$ with 4-valent vertex set $V(\overline{D_{\rho}})$. Let $f \co V(\overline{D_{\rho}}) \to \{ T_{-}, T_{+}, T_0, T_{\infty} \}$ be a function that assigns a member in $\{ T_{-}, T_{+}, T_0, T_{\infty} \}$ for each 4-valent vertex in $\overline{D}_{\rho}$ (or equivalently, for each thick edge of $D_{\rho}$). Observe that there are $4^n$ assignments of $\overline{D}_{\rho}$, where $n$ is the number of vertices in $\overline{D}_{\rho}$ (or equivalently, $n$ is the number of thick edges in $D_{\rho}$). Denote these assignments by $\{ f_1, f_2, \dots, f_{4^n} \}$. For each assignment $f_i $, denote by $(\overline{D_{\rho}}, f_i)$ the ordinary tangle diagram obtained from $\overline{D}_{\rho}$ by replacing the 4-valent vertices as prescribed by the assignment $f_i$. We call such a tangle diagram $(\overline{D}_{\rho}, f_i)$ a \textit{state} of $\overline{D}_{\rho}$.

Denote by $\calC(\overline{D}_{\rho}) : = \{ (\overline{D}_{\rho}, f_i)\,\, | \,\, 1 \leq i \leq 4^n  \}$ the collection of all states associated with $\overline{D}_{\rho}$. 
 
Two (ordinary) tangles are called \textit{3-equivalent} if their diagrams are related by a finite sequence of the three Reidemeister moves R1 -- R3, and the \textit{$3$-moves} below:
\[ \raisebox{-3pt}{\includegraphics[height=0.8in, angle = 90]{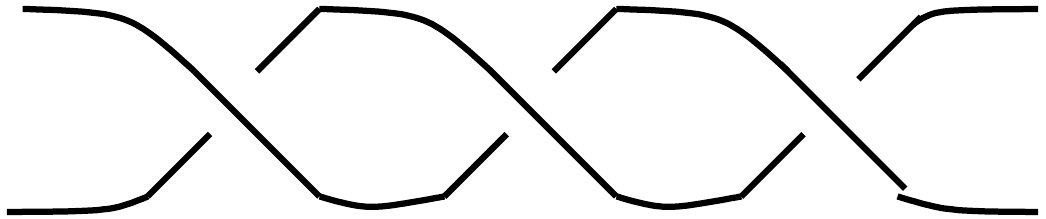}} \quad  \stackrel{+3-move}{\longleftrightarrow}  \quad \raisebox{-3pt}{\includegraphics[height=0.8in, angle=90]{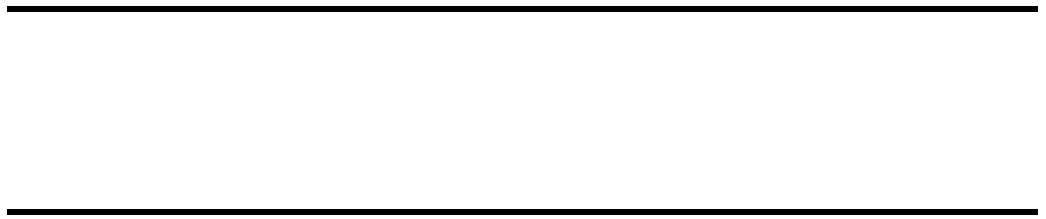}}\quad  \stackrel{-3-move}{\longleftrightarrow} \quad \reflectbox{\raisebox{-3pt}{\includegraphics[height=0.8in, angle = 90]{+3-move}}} \]

Two collections of tangles $S_1$ and $S_2$ are called \textit{3-equivalent} if every member of $S_1$ is 3-equivalent to some member of $S_2$. The following proposition is essentially Theorem 3.3 from~\cite{L-S}, thus we only sketch its proof.

\begin{proposition}\label{prop:3-move inv}
Let $\overline{D}_{\rho}$ and $\overline{D'}_{\rho}$ be two diagrams of a $4$-valent tangle $\overline{G}_{\rho}$ with $n$ $4$-valent vertices. Then there exists a permutation $\sigma$ of the set $\{1, \dots, 4^n \}$ such that the tangle $(\overline{D}_{\rho}, f_i)$ is $3$-equivalent to the tangle $(\overline{D'}_{\rho}, f_{\sigma(i)})$ for each $1 \leq i \leq 4^n$. In particular, the 3-equivalence class of the collection $\calC(\overline{D}_{\rho})$ is an ambient isotopy invariant of $\overline{G}_{\rho}$.
\end{proposition}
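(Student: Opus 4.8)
The plan is to reduce the statement to the case where $\overline{D}_{\rho}$ and $\overline{D'}_{\rho}$ differ by a single move, and then compose. By Lemma~\ref{lemma} and the discussion preceding it, two diagrams of $\overline{G}_{\rho}$ are related by a finite sequence of the Reidemeister moves R1--R3 and the moves N4--N5 of Figure~\ref{fig:isotopies 4-valent graphs}. Since none of these moves creates or destroys a $4$-valent vertex, throughout such a sequence I can canonically identify the vertices of $\overline{D}_{\rho}$ with those of $\overline{D'}_{\rho}$; it therefore suffices to produce, for each single move, a bijection $\pi$ of the four local replacements $\{T_-, T_+, T_0, T_{\infty}\}$ at the affected vertex (and the identity at all other vertices) so that the two resulting states are $3$-equivalent. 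The induced map $\sigma$ on the $4^n$ assignments is then a permutation, and the permutation for the whole sequence is the composite of the per-move permutations.

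First I would dispose of the moves that do not interact with the vertex replacements. If $\overline{D}_{\rho}$ and $\overline{D'}_{\rho}$ differ by one of R1, R2, R3 applied in a disk containing no $4$-valent vertex, then each crossing involved survives unchanged in every state, and for $\pi = \mathrm{id}$ the states $(\overline{D}_{\rho}, f_i)$ and $(\overline{D'}_{\rho}, f_i)$ differ by exactly the same Reidemeister move; hence they are $3$-equivalent. The move N4 slides a strand past a $4$-valent vertex; after replacing that vertex by any of $T_-, T_+, T_0, T_{\infty}$, the slide becomes a short sequence of R2 and R3 moves on the corresponding state (a strand passing a crossing is an R3 move, and a strand passing a smoothing is planar isotopy together with R2 moves). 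So again $\pi = \mathrm{id}$ works and the paired states are related by R1--R3, hence $3$-equivalent.

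The heart of the argument, and the only place where a genuine $3$-move is needed, is the move N5, which absorbs a half-twist of two adjacent edges into a $4$-valent vertex. Replacing that vertex by one of the four tangles turns N5 into the operation of adding a single horizontal twist to the tangle. Reading off Conway's tangle arithmetic with $T_0 = [0]$, $T_{\infty} = [\infty]$, $T_+ = [1]$, $T_- = [-1]$, this operation sends $[0] \mapsto [1]$, $[-1] \mapsto [0]$ and $[\infty] \mapsto [\infty]$ by isotopy alone, while $[1] \mapsto [2]$; the last tangle is not one of the four, but a single $3$-move turns $[2]$ into $[-1]$. Thus the twist realizes, up to R1--R3 and one $3$-move, the bijection $\pi = (T_0\ T_+\ T_-)$ fixing $T_{\infty}$, so the paired states are $3$-equivalent and $\sigma$ is a permutation. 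I expect this computation to be the main obstacle: one must check all four cases explicitly, verify that each outcome again lies in $\{T_-, T_+, T_0, T_{\infty}\}$ modulo $3$-moves, and confirm that the resulting correspondence is a bijection (this is precisely what forces us to work up to $3$-equivalence rather than ambient isotopy).

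Finally, composing the per-move permutations produces the desired $\sigma$ with $(\overline{D}_{\rho}, f_i)$ $3$-equivalent to $(\overline{D'}_{\rho}, f_{\sigma(i)})$ for every $i$. In particular every member of $\calC(\overline{D}_{\rho})$ is $3$-equivalent to a member of $\calC(\overline{D'}_{\rho})$ and, via $\sigma^{-1}$, conversely; so the two collections are $3$-equivalent. Since any two diagrams of $\overline{G}_{\rho}$ are related by R1--R3 and N4--N5, the $3$-equivalence class of $\calC(\overline{D}_{\rho})$ depends only on the ambient isotopy class of $\overline{G}_{\rho}$, which is the asserted invariance.
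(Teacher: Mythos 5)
Your proposal is correct and follows essentially the same route as the paper's proof: a case analysis over single moves, with the identity correspondence for R1--R3 and N4, and a nontrivial pairing at the N5 vertex where exactly one replacement ($[1]\mapsto[2]\leftrightarrow[-1]$) requires a $3$-move. Your Conway-arithmetic bookkeeping is just a tidy rephrasing of the paper's explicit diagram check, and your added care about composing per-move permutations fills in what the paper dismisses with ``there should be no difficulty to construct the permutation $\sigma$.''
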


\begin{proof}
Without loss of generality, assume that $\overline{D'}_{\rho}$ is obtained from $\overline{D}_{\rho}$ by applying exactly one of the moves R1, R2, R3, N4 and N5.

Case I (moves R1 -- R3). It is obvious that the Reidemeister moves R1, R2 or R3 do not affect the local replacements at a $4$-valent vertex.

Case II (move N4). Suppose that $\overline{D}_{\rho}$ and $\overline{D'}_{\rho}$ are diagrams that are identical except in a small neighborhood where they differ by a move of type N4. Below we illustrate the effect of this move on local replacements at the involved vertex. 
\[ \overline{D}_{\rho} = \raisebox{-12pt}{\includegraphics[height=0.4in]{N4-left1}}\,\, \leadsto \left \{ \raisebox{-12pt}{\includegraphics[height=0.4in]{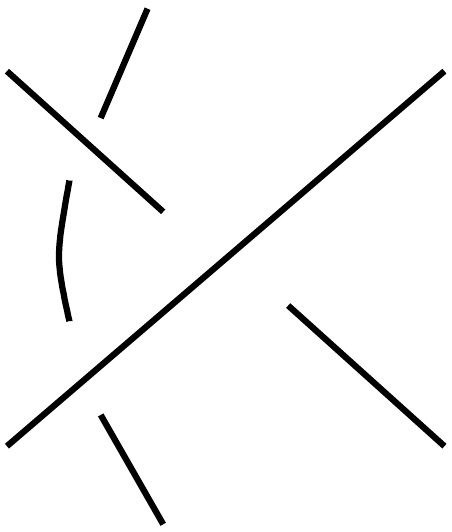}}, \,\, \  \raisebox{-12pt}{\includegraphics[height=0.4in]{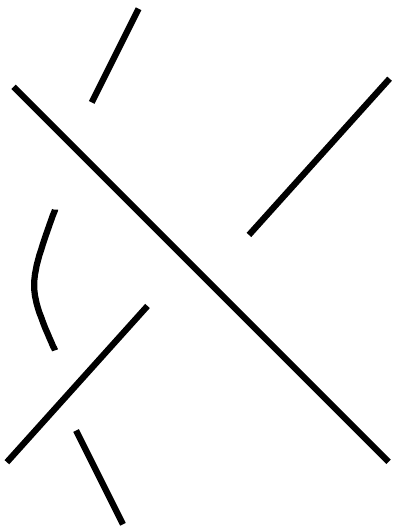}}, \,\, \, \raisebox{-12pt}{\includegraphics[height=0.35in]{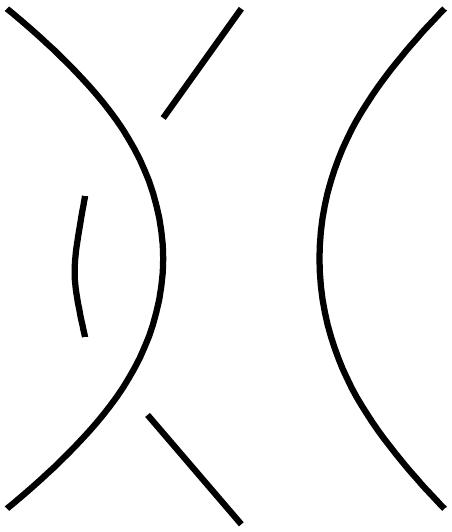}}, \,\, \, \raisebox{-12pt}{\includegraphics[height=0.37in]{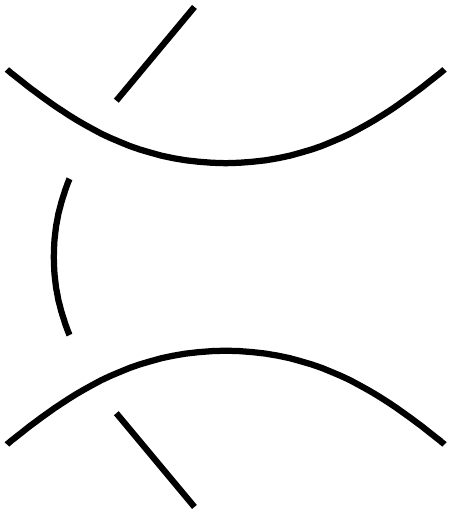}}  \right \}  \]
\[ \overline{D'}_{\rho} =  \raisebox{-12pt}{\includegraphics[height=0.4in]{N4-right1}}\,\, \, \leadsto \left \{ \raisebox{-12pt}{\includegraphics[height=0.4in]{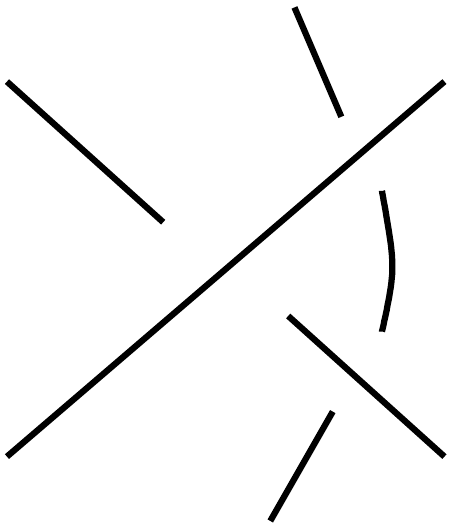}}, \,\,\,  \raisebox{-12pt}{\includegraphics[height=0.4in]{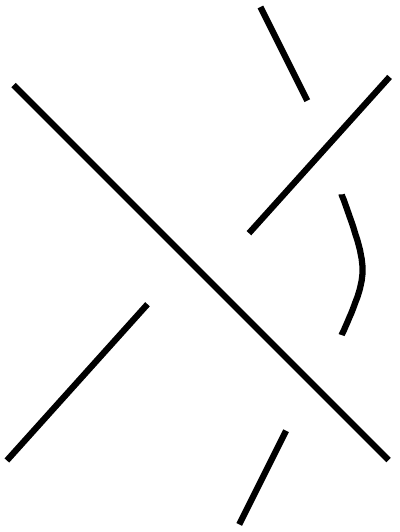}}, \,\, \, \raisebox{15pt}{\includegraphics[height=0.35in, angle = 180]{repcross2-5}}, \,\,\, \raisebox{-12pt}{\includegraphics[height=0.37in]{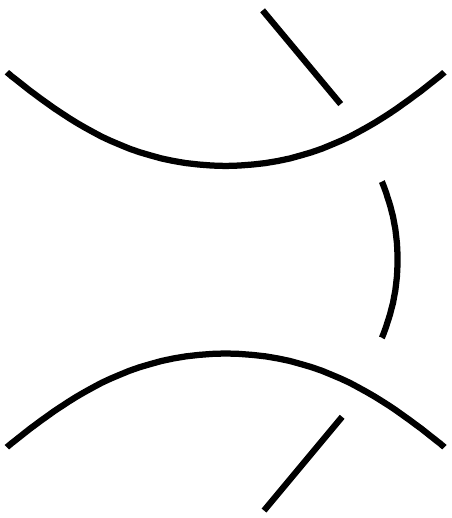}}  \right \}  \]
We see that the two collections above are ambient isotopic, and therefore, are 3-equivalent.

Case III (move N5). Suppose that $\overline{D}_{\rho}$ and $\overline{D'}_{\rho}$ are diagrams that are identical except in a small neighborhood where they differ by a move of type N5, as shown below:
\[ \overline{D}_{\rho} = \raisebox{-10pt}{\includegraphics[height=0.35in]{N5-left1}}\,\,\, \mbox{or} \,\,\, \raisebox{-10pt}{\includegraphics[height=0.35in]{N5-left2}}, \,\,\, \mbox{and} \,\,\, \overline{D'}_{\rho} = \raisebox{-10pt}{\includegraphics[height=0.35in]{N5-right}}  \]
The local replacements at the vertex involved in the move N5 are as follows:
\[  \raisebox{-10pt}{\includegraphics[height=0.35in]{N5-left1}} \leadsto \left\{\,\, \raisebox{-7pt}{\includegraphics[height=0.23in]{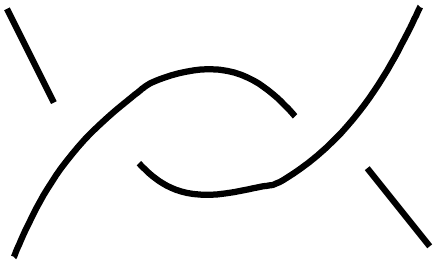}}\,,\, \reflectbox{\raisebox{11pt}{\includegraphics[height=0.38in, angle = 270]{reid2-1}}}\,,\, \raisebox{-7pt}{\includegraphics[height=0.25in]{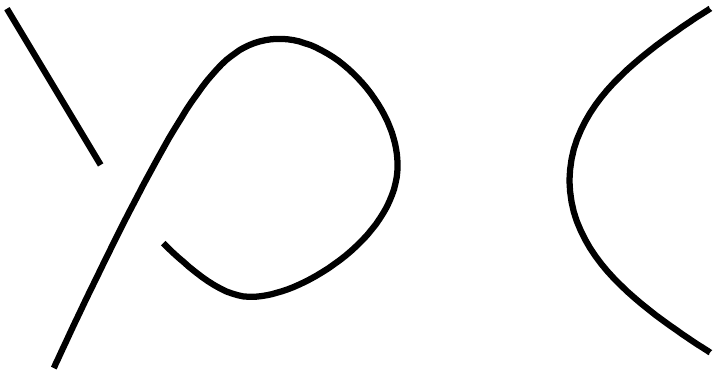}}\,,\, \raisebox{-7pt}{\includegraphics[height=0.25in]{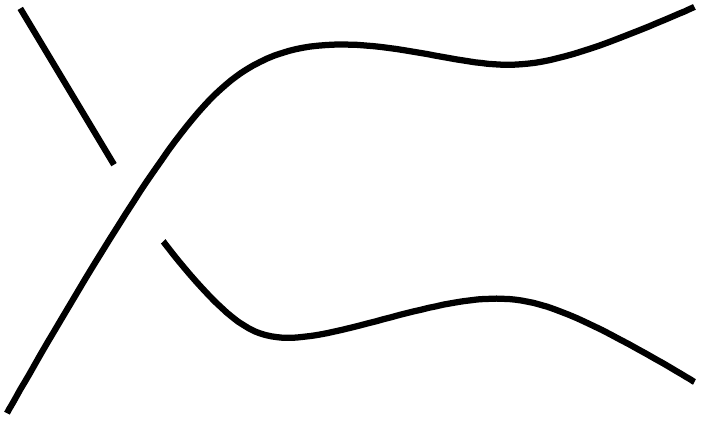}} \,\, \right\}  \]

\[ \raisebox{-10pt}{\includegraphics[height=0.35in]{N5-right}} \leadsto   \left \{ \,\, \raisebox{-7pt}{\includegraphics[height=0.3in]{poscrossing}}, \,\,\,  \raisebox{-7pt}{\includegraphics[height=0.3in]{negcrossing}},\,\,\, \raisebox{-7pt}{\includegraphics[height=0.3in]{A-smoothing}}, \,\,\,  \raisebox{-7pt}{\includegraphics[height=0.3in, angle = 90]{A-smoothing}} \,\, \right \} \]

\[ \raisebox{-10pt}{\includegraphics[height=0.35in]{N5-left2}} \leadsto \left \{\,\, \raisebox{-7pt}{\includegraphics[height=0.38in, angle = 90]{reid2-1}}, \,\,\raisebox{-7pt}{\includegraphics[height=0.27in]{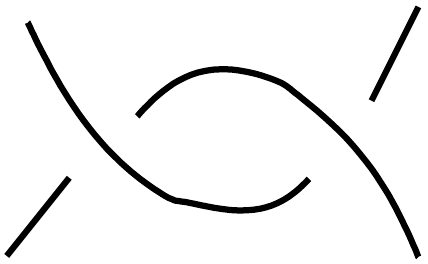}}, \raisebox{-7pt}{\includegraphics[height=0.27in]{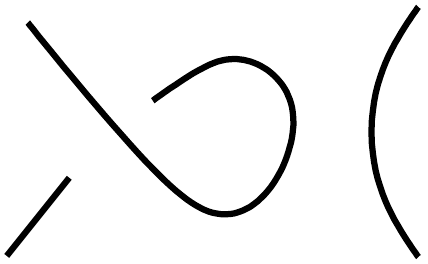}}, \,\, \raisebox{-7pt}{\includegraphics[height=0.27in]{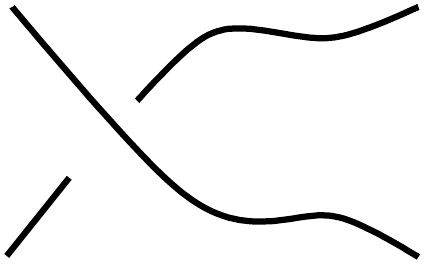}}\,\,  \right \}  \]
It is clear that, in this case, the two collections $\calC(\overline{D}_{\rho})$ and $\calC(\overline{D'}_{\rho})$ of tangle diagrams are not ambient isotopic. However,  the diagrams \raisebox{-3pt}{\includegraphics[height=0.18in]{N5-r5}} and \raisebox{-3pt}{\includegraphics[height=0.18in]{negcrossing}} differ by a $+3$-move. Similarly, the diagrams \raisebox{-3pt}{\includegraphics[height=0.18in]{N5-r2}} and  \raisebox{-3pt}{\includegraphics[height=0.18in]{poscrossing}} are related by a $3$-move.  Then it is easy to see that the collections $\calC(\overline{D}_{\rho})$ and $\calC(\overline{D'}_{\rho})$ are $3$-equivalent.

Finally, there should be no difficulty to construct the permutation $\sigma$ on the set $\{1, \dots, 4^n \}$ in the statement of the proposition.
\end{proof}

\begin{proposition}\label{prop:inv-enhanced}
Let $D_{\rho}$ be a diagram of an enhanced trivalent tangle $(G, \rho)$, and let $\overline{D}_{\rho}$ be the $4$-valent tangle diagram obtained from $D_{\rho}$ by contracting its thick edges. Then the $3$-equivalence class of the collection $\calC(\overline{D}_{\rho})$ of ordinary tangle diagrams is an ambient isotopy invariant of $(G, \rho)$, as well as an invariant of the IH-equivalence class of $(G, \rho)$.\end{proposition}

\begin{proof} 
The statement follows from Lemma~\ref{lemma} and Proposition~\ref{prop:3-move inv}.\end{proof}


 \subsection{Invariants for enhanced trivalent tangles }\label{ssec:inv trivalent tangles}
 
 An invariant $\calI$ for classical tangles is called a \textit{3-move invariant} if $\calI(T) = \calI(T')$ for any two 3-equivalent tangles $T$ and $T'$. 
 
 By Proposition~\ref{prop:inv-enhanced}, we have that if $\calI$ is a 3-move invariant for tangles, then it can be extended to an ambient isotopy invariant $\calI(G, \rho)$ of an enhanced trivalent tangle $(G, \rho)$---with associated 4-valent tangle $\overline{G}_{\rho}$---as follows:
 \[ \calI(G, \rho): = \sum_{(\overline{D}_{\rho}, f_i) \in \calC(\overline{D}_{\rho})} \calI(\overline{D}_{\rho}, f_i), \] 
 where the sum is taken over all states $(\overline{D}_{\rho}, f_i)$ of $\overline{D}_{\rho}$ and where $\overline{D}_{\rho}$ is a diagram of $\overline{G}_{\rho}$.
 Moreover, by our construction, $\calI(G, \rho)$ is invariant under the IH-move on enhanced trivalent tangles as well, and thus it yields an invariant of the IH-equivalence class of the enhanced trivalent tangle $(G, \rho)$, and equivalently, an invariant of the enhanced handlebody-tangle with spine $(G, \rho)$. 

 Furthermore, the following sum taken over all enhanced trivalent tangles $(G, \rho)$ associated to a given trivalent tangle $G$:
  \[\calI(G): = \sum_{\rho} \calI(G, \rho),\]
  yields an invariant of $G$. If the tangle has no univalent vertices, the method described here provides a recipe for constructing invariants for knotted trivalent graphs.
  
  We remark that such techniques have been used before to obtain invariants of knotted graphs. For example, the idea of using collections of tangles to obtain invariants of knotted graphs was first introduced (to the best of our knowledge) by Kauffman in~\cite{Ka}. Moreover, 3-move invariants of knots and links were used by Lee and Seo in \cite{L-S} to construct numerical invariants for knotted 4-valent graphs. Our invariant described in Section~\ref{ssec:3-move inv tangles} is closely related to that constructed in~\cite{L-S}.

\section{Some invariants for classical tangles}\label{sec:inv tangles}

Our goal now is to construct 3-move invariants for classical tangles and consequently arrive at invariants for enhanced trivalent tangles and handlebody-tangles, as explained in Section~\ref{sec:trivalent tangles}.


\subsection{The skein module $E_{m,n}$}\label{ssec:skein module}

Let $m, n$ be non-negative integers such that $m + n$ is even, and let $q$ be an indeterminate. An $(m,n)$-tangle $T$ is an embedding in $\R^2\times [0,1]$ of $\frac{1}{2}(m+n)$ arcs and a finite number of circles, with the property that the endpoints of the arcs are distinct points in $\R^2 \times \{0\}$ and $\R^2 \times \{1\}$. A tangle diagram $D$ of $T$ is a projection of $T$ onto $\R \times [0,1]$, such that the endpoints of $T$ are mapped to distinct points in the lines $\R \times \{0\}$ and $\R \times \{1\}$.   

The \textit{skein $(m,n)$-module} is the free $\Z[q, q^{-1}]$-module $E_{m, n}$ generated by equivalence classes of $(m,n)$-tangle diagrams modulo the ideal generated by elements:
 \[  \raisebox{-7pt}{\includegraphics[height=0.25in]{poscrossing}} - q\, \raisebox{-7pt}{\includegraphics[height=0.25in]{A-smoothing}} - q^{-1} \,\raisebox{-7pt}{\includegraphics[height=0.25in, angle = 90]{A-smoothing}}, \quad \mbox{and} \quad D \cup \bigcirc - \delta D, \quad \mbox{where} \,\,\delta = -q^2 - q^{-2}. \]

 Each $(m, n)$-tangle diagram $D$ represents an element of $E_{m,n}$, denoted by $\brak{D}$, and called the \textit{skein class} of $D$. There is a basis for  $E_{m, n}$ represented by \textit{flat $(m, n)$-tangle diagrams} (crossingless matchings of the $m$ + $n$ endpoints), and the coefficients of the skein class $\brak{D}$ with respect to this basis are Laurent polynomials in $q$. If $m = n = 0$, that is the tangle represented by $D$ is a link $L$, then $\brak{D}$ is the Kauffman bracket~\cite{K1} of $L$, up to a factor of $\delta = -q^2-q^{-2}$.
 
 Let $B_{m, n} = \{ e_1, e_2, \dots, e_p\}$ be the basis of $E_{m,n}$ consisting of all flat $(m,n)$-tangle diagrams. We note that $|B_{m,n}|=\binom{2k}{k}/(k+1)$ is the $k$-th Catalan number, where $k = \frac{1}{2}(m+n)$. For each $(m,n)$-tangle diagram $D$,  denote the coordinate vector of $\brak{D}$ relative to $B_{m, n}$ by  $v(D) = [x_1 \,\, x_2\,\, \dots \,\, x_p]$, where we write 
\[ \brak{D} = x_1e_1+ x_2e_2+ \dots + x_pe_p,\,\, \text{for}\,\, x_i  \in \mathbb{Z}[q, q^{-1}] .\]
We remark that $v(D)$ is a regular isotopy invariant of the tangle $T$ represented by $D$. 

If $\overline{D}$ is the \textit{mirror image} of $D$ (obtained from $D$ by replacing each over-crossing by an under-crossing and vice-versa), then $v(\overline{D}) = [\overline{x}_1 \,\, \overline{x}_2\,\, \dots \,\, \overline{x}_p]$, where $\overline{x}_i: = x_i |_{q \leftrightarrow q^{-1}}$. Specifically, $\overline{x}_i$ is obtained from $x_i$ by interchanging $q$ and $q^{-1}$.


\subsection{3-move invariants for classical tangles}\label{ssec:3-move inv tangles}

Given two $(m,n)$-tangles $T_1$ and $T_2$, denote by $T_1 \otimes T_2$ the $(2m, 2n)$-tangle obtained by placing $T_2$ to the right of $T_1$ without any intersection or linking. That is, $T_1 \otimes T_2$ is the tensor product of the morphisms represented by these tangles in the category of tangles. Denote by $cl(T_1 \otimes T_2)$ the \textit{plat closure} of $T_1 \otimes T_2$, which is a link or a knot obtained by joining  with simple arcs adjacent upper endpoints and respectively, adjacent lower endpoints of $T$. Figure~\ref{fig:plat closure} displays the plat closure of the tensor product of a $(3,3)$-tangle with its mirror image.
\begin{figure}[ht]
\[ T \otimes \overline{T}=  \raisebox{-23pt}{\includegraphics[height=0.65in]{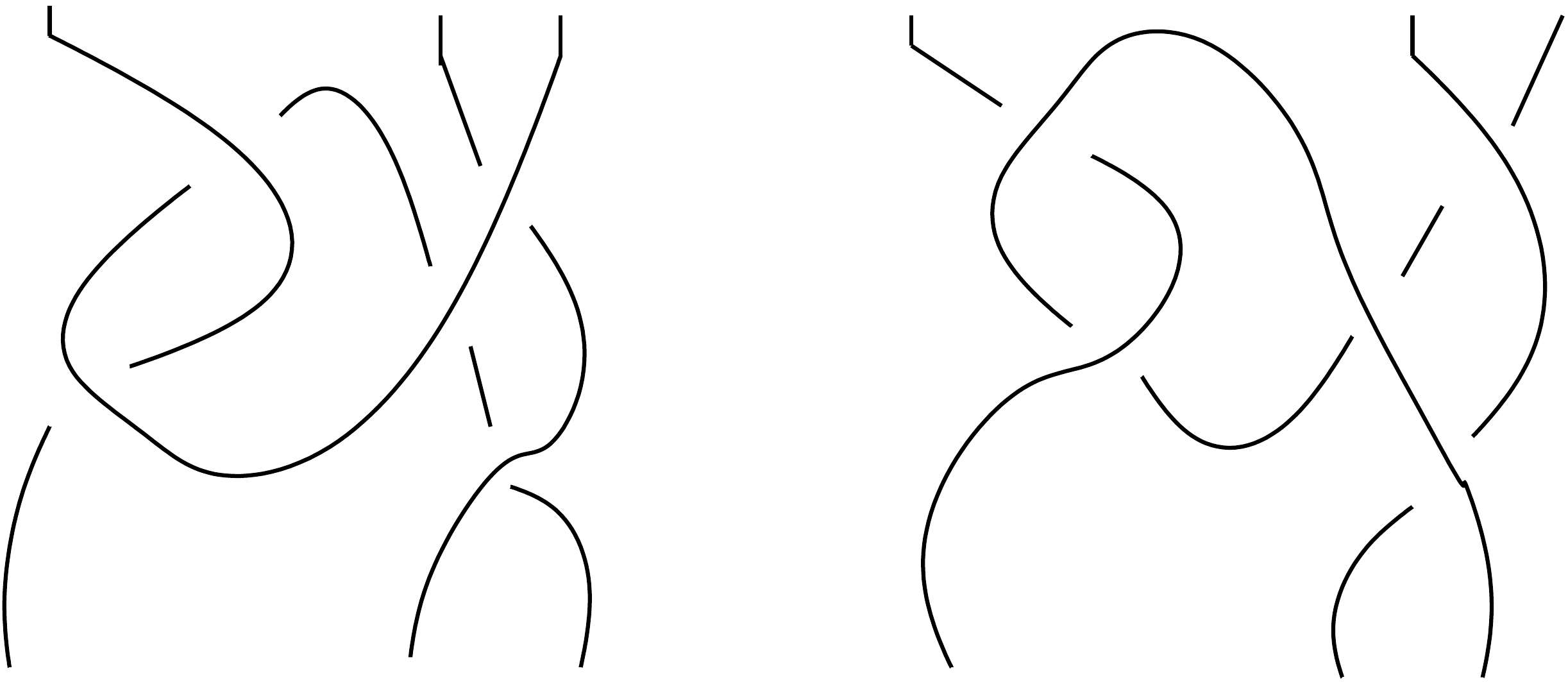}} \longrightarrow cl(T \otimes \overline{T}) = \raisebox{-23pt}{\includegraphics[height=0.65in]{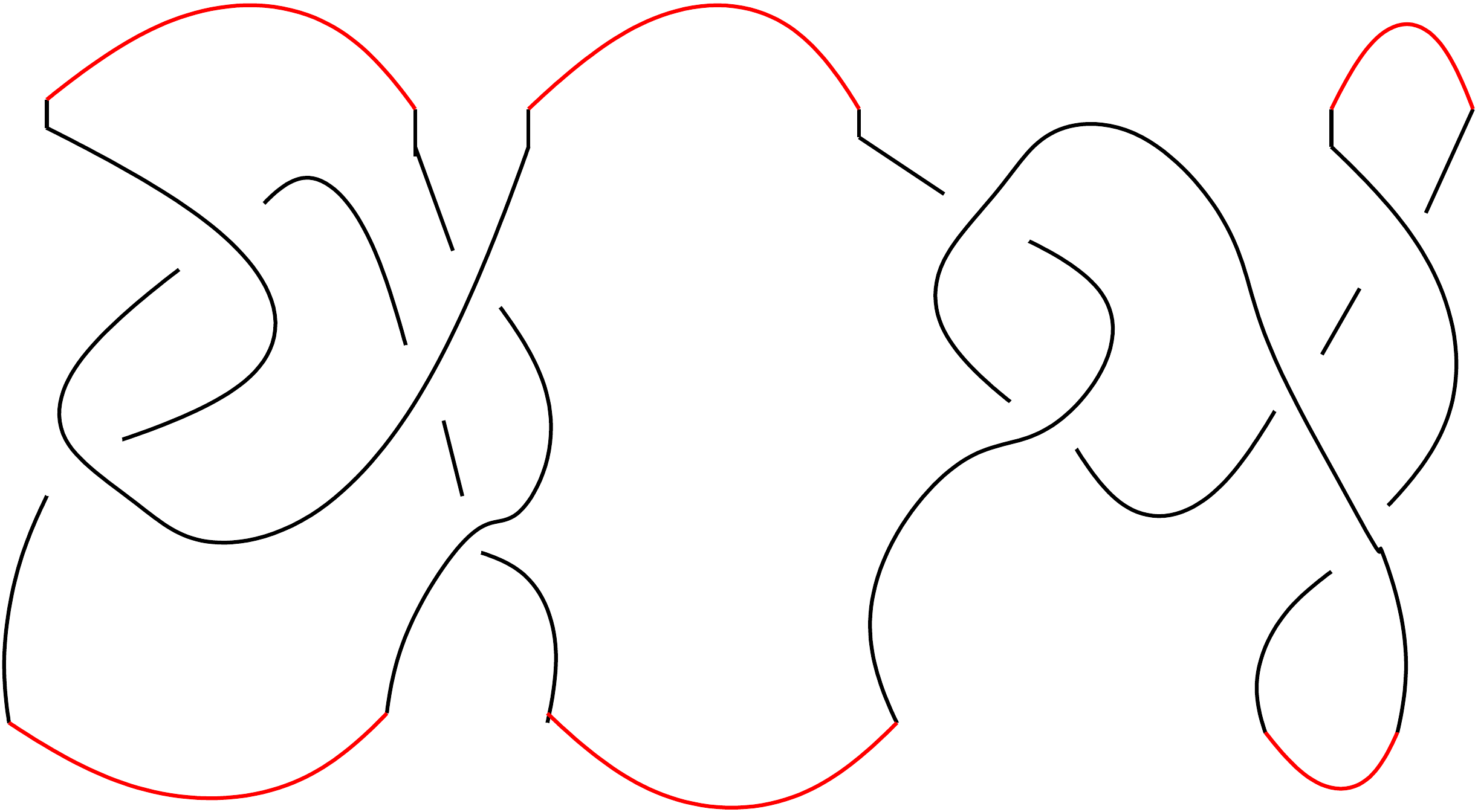}} \]
\caption{The plat closure of $T \otimes \overline{T}$}\label{fig:plat closure}
\end{figure}

Define a bilinear form $[ \,\,,\, \,] : E_{m, n} \times E_{m, n} \longrightarrow \mathbb{Z}[q, q^{-1}]$
 given by
\[ [e_i, e_j] = (-q^2 -q^{-2})^{\ell}  \quad \text{for all} \,\, 1 \leq i,j \leq p,\]
where $\ell = \#$ of closed loops in $cl(e_i \otimes e_j)$. Extend $[ \,\,,\, \,]$ by bilinearity to all elements in $E_{m, n}$.

Let $a_{ij} =  [e_i, e_j] $, and denote by $A = (a_{ij})_{1 \leq i,j \leq p}$ the matrix of $[ \,\,,\, \,]$ relative to the basis $B_{m, n}$. For any tangle diagram $D$ of an $(m, n)$-tangle, we have
\begin{eqnarray*}
[\brak{D}, \brak{\overline{D}} ] = v(D)\, A\, v(\overline{D})^t 
= [x_1 \,\, x_2\,\, \dots \,\, x_p] \,A \, \left[ \begin{array}{c} \overline{x}_1 \\ \overline{x}_2\\ \vdots \\ \overline{x}_p  \end{array} \right]  \in \mathbb{Z}[q, q^{-1}].
\end{eqnarray*}

\begin{definition} \label{def:P(D)}
Let $T$ be an $(m,n)$-tangle and let $D$ be a diagram of $T$. Denote by \[P(D): = [\brak{D}, \brak{\overline{D}} ].\]
\end{definition}

\begin{proposition} \label{prop:inv for tangles}
Let $D$ be a diagram representing an $(m, n)$-tangle $T$. Then $P(D)$ is invariant under the three Reidemeister moves.
\end{proposition}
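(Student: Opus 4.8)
The plan is to treat the two regular-isotopy moves R2 and R3 separately from the curl move R1, since the coordinate vector $v(D)$ is only a regular isotopy invariant and the purpose of pairing $D$ against its mirror image is precisely to absorb the R1 anomaly. Throughout I would use the two facts recorded in Section~\ref{ssec:skein module}: that $v(D)$ is a regular isotopy invariant of $T$, and that $v(\overline{D}) = \overline{v(D)}$, i.e. the coordinates of $\brak{\overline{D}}$ are obtained from those of $\brak{D}$ by the substitution $q \leftrightarrow q^{-1}$. The matrix $A$ is fixed once and for all.

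For R2 and R3, suppose $D$ and $D'$ differ by a single such move. Then $v(D') = v(D)$ by regular isotopy invariance, and since the mirror images $\overline{D}$ and $\overline{D'}$ also differ by an R2 or R3 move, $v(\overline{D'}) = v(\overline{D})$. Hence $P(D') = v(D')\,A\,v(\overline{D'})^t = v(D)\,A\,v(\overline{D})^t = P(D)$, so these two moves present no difficulty.

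The substance is the R1 case. First I would compute the scalar by which the skein class is rescaled under a curl: applying the defining skein relation at the crossing of a kink and using the relation $D \cup \bigcirc = \delta D$ with $\delta = -q^2 - q^{-2}$, one finds that introducing a curl of one sign multiplies $\brak{D}$, and thus $v(D)$, by $-q^{3}$, while a curl of the opposite sign multiplies it by $-q^{-3}$. The key point is that taking the mirror image reverses the sign of the curl, so that if $D'$ is obtained from $D$ by a curl scaling $v$ by $-q^{\pm 3}$, then $\overline{D'}$ is obtained from $\overline{D}$ by the opposite curl, scaling $v(\overline{D})$ by $-q^{\mp 3}$; this is also immediate from $v(\overline{D'}) = \overline{v(D')}$, since the bar operation sends $-q^{\pm 3}$ to $-q^{\mp 3}$. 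Consequently $P(D') = v(D')\,A\,v(\overline{D'})^t = (-q^{\pm 3})(-q^{\mp 3})\,v(D)\,A\,v(\overline{D})^t = P(D)$, because $(-q^{\pm 3})(-q^{\mp 3}) = q^{\pm 3 \mp 3} = 1$.

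The main obstacle is this R1 step, specifically tracking the curl-rescaling factor together with its mirror-image counterpart. I expect the computation of the $-q^{\pm 3}$ factor itself to be routine, but the conceptual heart of the proposition---the reason $P(D)$ upgrades the merely regular-isotopy invariant $v(D)$ to a full ambient isotopy invariant---is that the two anomaly factors are reciprocal and therefore cancel in the bilinear pairing. I would be careful to verify both signs of curl and to note that the cancellation does not depend on which flat basis diagrams occur in $\brak{D}$, since the rescaling is by a single global scalar.
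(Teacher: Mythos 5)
Your proposal is correct and follows essentially the same route as the paper: R2 and R3 are handled by the regular isotopy invariance of $v(D)$, and R1 by the cancellation of the curl factors $-q^{\pm 3}$ on $\brak{D}$ against $-q^{\mp 3}$ on $\brak{\overline{D}}$ in the pairing. Your extra care about both curl signs and the observation that $v(\overline{D}) = \overline{v(D)}$ makes the mirror-image scaling automatic is a welcome, if minor, refinement of the paper's argument.
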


\begin{proof} Let $T$ be an $(m, n)$-tangle, and $D$ and $D'$ be diagrams of $T$. Without loss of generality, we may assume that $D$ and $D'$ differ by exactly one of the Reidemeister moves.

Case I. Suppose that $D$ and $D'$ differ by an R2 or R3 move. Then $v(D) = v(D')$ and $v(\overline{D}) = v(\overline{D'})$, which implies that $P(D) = P(D')$. 

Case II. Suppose that $D$ and $D'$ differ by an R1 move, where $D$ has an extra twist in it. Since $\brak{\, \raisebox{-5pt}{\includegraphics[height=0.22in]{poskink}}\,} =   - q^3 \brak{\,\raisebox{-5pt}{\includegraphics[height=0.22in]{arc}}\,}$ and $ \brak{\, \overline{\raisebox{-5pt}{\includegraphics[height=0.22in]{poskink}}}\,} = \brak{\, \reflectbox{\raisebox{11pt}{\includegraphics[height=0.22in, angle = 180]{poskink}}}\,} =  -  q^{-3} \brak{\,\raisebox{-5pt}{\includegraphics[height=0.22in]{arc}}\,}$, it follows that $P(D) = (- q^3) ( -q^{-3}) P(D') = P(D')$.
\end{proof}

\begin{remark}
$P(T): = P(D)$ is the Kauffman bracket of the link/knot $cl(T\otimes \overline{T})$, up to a factor of $\delta = -q^2-q^{-2}$.
\end{remark}

We are interested in 3-move invariants for tangles, and thus, in particular, we are interested in the behavior of $\brak{\,\,\,}$ and $[ \,\,,\,\,]$ under the 3-moves for tangles. We have that:
\[ \brak{\,\raisebox{-3pt}{\includegraphics[height=0.8in, angle = 90]{+3-move}}\,} =  q^3\,  \brak{\,\raisebox{-5pt}{\includegraphics[height=0.22in, angle = 90]{A-smoothing}}\,} + (q-q^{-3} + q^{-7})\,\brak{\,\raisebox{-5pt}{\includegraphics[height=0.22in]{A-smoothing}}\,} \]
\[ \brak{\, \reflectbox{\raisebox{-3pt}{\includegraphics[height=0.8in, angle = 90]{+3-move}}} \,} =  q^{-3}\,  \brak{\,\raisebox{-5pt}{\includegraphics[height=0.22in, angle = 90]{A-smoothing}}\,} + (q^{-1}-q^{3} + q^{7})\,\brak{\,\raisebox{-5pt}{\includegraphics[height=0.22in]{A-smoothing}}\,}.   \]

Let now $q \in \mathbb{C}$ such that it is a nonzero common root of $q-q^{-3} + q^{-7}$ and $q^{-1}-q^{3} + q^{7}$. That is, let $q= e^{\frac {k \pi i}{12}}$, where $k \in \{1, 5, 7, 11, 13, 17, 19, 23\}$.

\begin{definition}
Let $T$ be an $(m,n)$-tangle represented by a diagram $D$. Denote by 
\[P(D)_k: = P(D) |_{q = e^{\frac{k \pi i}{12}}} \in \mathbb{C}\,\,\, \text{where} \,\,\, k \in \{1, 5, 7, 11, 13, 17, 19, 23\}.\]
\end{definition}

\begin{theorem} \label{main result}
Let $T$ be an $(m,n)$-tangle and let $D$ be a diagram of $T$. The complex number $P(D)_k$ is a 3-move invariant for $T$, for each $k \in \{1, 5, 7, 11, 13, 17, 19, 23\}$.
\end{theorem}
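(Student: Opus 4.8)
The plan is to build directly on Proposition~\ref{prop:inv for tangles}: since $P(D)$ is already invariant under R1--R3, it suffices to show that $P(D)_k$ is unchanged by a single $3$-move. So suppose $D$ and $D'$ are diagrams of $(m,n)$-tangles that agree outside a small disk, inside which $D$ carries the three-crossing twist region of the $+3$-move while $D'$ carries the trivial two-strand tangle. Because a $3$-move is local and does not alter the endpoints, both $D$ and $D'$ represent elements of the same skein module $E_{m,n}$, and the matrix $A$ of the form $[\,,\,]$ is unchanged.

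First I would feed the two displayed skein expansions into the linearity of $\brak{\,\cdot\,}$. Writing $D_0$ and $D_\infty$ for the diagrams obtained from $D$ by replacing the twist region with $T_0$ and $T_\infty$ respectively, the first expansion gives $\brak{D} = q^3\brak{D_\infty} + (q - q^{-3} + q^{-7})\brak{D_0}$. The whole point of choosing $q = e^{k\pi i/12}$ is that $q - q^{-3} + q^{-7} = 0$ there, so the second term drops out and $\brak{D} = q^3\brak{D_\infty}$. Since the trivial tangle produced by the $+3$-move is exactly the smoothing carrying the coefficient $q^3$ (namely $T_\infty$), we have $D' = D_\infty$ and hence $\brak{D} = q^3\brak{D'}$, i.e. $v(D) = q^3\, v(D')$ at $q = e^{k\pi i/12}$.

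Next I would run the identical argument on the mirror image. The region of $\overline{D}$ lying over the disk is the mirror three-crossing twist, and $\overline{D'}$ carries $\overline{T_\infty} = T_\infty$, since the flat smoothings are their own mirror images. The second displayed expansion, together with $q^{-1} - q^3 + q^7 = 0$ at our values of $q$, yields $\brak{\overline{D}} = q^{-3}\brak{\overline{D'}}$, that is $v(\overline{D}) = q^{-3}\, v(\overline{D'})$. Substituting both relations into $P(D) = v(D)\,A\,v(\overline{D})^t$ then gives $P(D)_k = q^3 q^{-3}\, v(D')\,A\,v(\overline{D'})^t = P(D')_k$, the scalars $q^{\pm 3}$ cancelling. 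The $-3$-move is handled by the same computation with the roles of $D$ and $D'$ interchanged (or, equivalently, by applying the above to the mirror move).

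The step I expect to be the main obstacle is the cancellation bookkeeping in the last paragraph: one must be sure that performing the $3$-move on $D$ forces the \emph{reciprocal} factor $q^{-3}$ on $\overline{D}$, rather than another copy of $q^3$. This is exactly why $P$ is defined as the pairing of $\brak{D}$ against $\brak{\overline{D}}$ rather than against $\brak{D}$ itself: the mirror symmetry converts the R1-type scaling anomaly $q^3$ into $q^{-3}$ and cancels it, mirroring the mechanism already used for R1 in the proof of Proposition~\ref{prop:inv for tangles}. A secondary point to record is that $q - q^{-3} + q^{-7}$ and $q^{-1} - q^3 + q^7$ share the nonzero roots $e^{k\pi i/12}$ for $k \in \{1,5,7,11,13,17,19,23\}$, so that both vanishing statements hold simultaneously at each admissible $k$; this is already built into the definition of the admissible $q$.
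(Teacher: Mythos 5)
Your proposal is correct and takes essentially the same route as the paper: the paper's proof also invokes Proposition~\ref{prop:inv for tangles} for the Reidemeister moves and then uses the two displayed skein expansions, whose non-identity coefficients vanish at $q = e^{\frac{k\pi i}{12}}$, so that the $3$-move scales $\brak{D}$ by $q^{3}$ and $\brak{\overline{D}}$ by $q^{-3}$, giving $P(D')_k = q^{3}q^{-3}P(D)_k = P(D)_k$. Your write-up simply makes explicit the mirror-image bookkeeping and the identification of the surviving smoothing with the trivial tangle, which the paper leaves implicit.
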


\begin{proof}
Proposition~\ref{prop:inv for tangles} implies that $P(D)_k$ is an ambient isotopy invariant for $T$, for each $k \in \{1, 5, 7, 11, 13, 17, 19, 23\}$. Let $D'$ be a diagram that is identical to $D$ except in a neighborhood where it differs from $D$ by a $3$-move. Then $P(D')_k = q^3  q^{-3} P(D)_k = P(D)_k$, for each $k \in \{1, 5, 7, 11, 13, 17, 19, 23\}$.  Therefore $P(D)_k$ is invariant under the $3$-moves, as well. 
\end{proof}

We remark that in the case of links, the numerical invariant $P(D)_k$ of Theorem~\ref{main result} is the invariant appearing in Theorem 4.4 of~\cite{L-S}.
\medskip


\textbf{Conclusions and final comments.} Using the method described in Section~\ref{sec:trivalent tangles} with the generic $\calI(T)$ (see Section~\ref{ssec:inv trivalent tangles}) replaced by the 3-move  tangle invariant $P(D)_k$ obtained in Section~\ref{ssec:3-move inv tangles}, we arrive, for each $k \in \{1, 5, 7, 11, 13, 17, 19, 23\}$, at a numerical invariant of the IH-equivalence classes of enhanced trivalent tangles, and moreover, at an invariant of trivalent tangles. For each $k$ as above, this yields a numerical invariant of enhanced handlebody-tangles. Our results hold for knotted trivalent graphs and enhanced handlebody-links, as well. We remark that the numerical invariants obtained here are not independent: for all $k \in \{1, 5, 7, 11, 13, 17, 19, 23\}$, $q = e^{\frac{k \pi i}{12}}$ are primitive $24$-th roots of unity, and the corresponding invariants can be obtained from one another by Galois group actions. (The author would like to thank the referee for pointing this out.)

The 3-move invariants for classical tangles constructed here were defined using elementary concepts from linear algebra. As noted in Section~\ref{ssec:3-move inv tangles}, these 3-move invariants for tangles are equivalent to certain evaluations of the Kauffman bracket polynomial of a link obtained in a specific way from the original tangle. The reader may want to compare this with Przytycki's~\cite{P} analysis of how the 3-moves influence the Jones polynomial~\cite{Jo}. In~\cite{P} it was also observed that tricoloring and $F(1, -1)$ are 3-move invariants of links, where $F$ is the Kauffman two-variable polynomial~\cite{K2}. 

We remark that Montesinos and Nakanishi conjectured that every link can be reduced to a trivial link by a sequence of 3-moves. Dabkowski and Przytycki~\cite{DP} found obstructions to this conjecture: they showed that the Borromean rings are not 3-equivalent to a trivial link. They also found a braid on three strands and with 20 crossings whose closure cannot be reduced by 3-moves to a diagram of a trivial link.


\end{document}